\journal{}
\newtheorem{theorem}{Theorem}[section]
\newtheorem{lemma}{Lemma}[section]
\newtheorem{remark}{Remark}[section]
	\newcommand{\spacevar}{\fat{x}}
	\newcommand{\timevar}{t}
	\newcommand{\entr}{s}
	\newcommand{\temp}{\theta}
	\newcommand{\dens}{\rho}
	\newcommand{\velo}{\fat{v}}
	\newcommand{\welo}{\fat{w}}
	\newcommand{\pres}{p}
	\newcommand{\qres}{q}
	\newcommand{\forcep}{h}
	\newcommand{\forcev}{\fat{g}}
	\newcommand{\forces}{\ell}
	\newcommand{\newx}{\fat{x}'}
	\newcommand{\newt}{\timevar'}
	\newcommand{\newr}{\dens'}
	\newcommand{\newv}{\fat{v'}}
	\newcommand{\newp}{\pres'}
	\newcommand{\newent}{\entr'}
	\newcommand{\newthe}{\temp'}
	\newcommand{\bvis}{\mu_B}
	\newcommand{\svis}{\mu_S}
	\newcommand{\etap}{\mu}
	\newcommand{\etav}{\eta}
	\newcommand{\thecon}{\kappa}
	\newcommand{\ident}{\mathcal{I}}
	\newcommand{\linearspace}{\mathscr{L}}
	\newcommand{\dime}{d}
	\newcommand{\transpose}{\top}
	\newcommand{\R}{\mathbb{R}}
	\newcommand{\C}{\mathbb{C}}
	\newcommand{\N}{\mathbb{N}}
	\newcommand{\T}{\mathcal{T}}
	\newcommand{\F}{\mathcal{F}}
	\newcommand{\e}{\text{e}}
	\newcommand{\Lcal}{\mathcal{L}}
	\newcommand{\Ncal}{\mathcal{N}}
	\newcommand{\B}{\mathcal{B}}
	\newcommand{\A}{\mathcal{A}}
	\newcommand{\BBB}{\mathbb{B}}
	\newcommand{\AAA}{\mathbb{A}}
	\newcommand{\Fprime}[2]{\F'_{#1}[#2]}
	\newcommand{\eps}{\varepsilon}
	\newcommand{\pro}{\mathbf{P}}
	\newcommand{\embedded}{\hookrightarrow}
	\newcommand{\toweakly}{\rightharpoonup}
	\newcommand{\toweaklystar}{\stackrel{*}{\rightharpoonup}}
	\newcommand{\bigO}{\mathcal{O}}
	\newcommand{\dt}{\partial_t}
	\newcommand{\grad}{\nabla}
	\newcommand{\gradvec}{\fat{\grad}}
	\newcommand{\divv}{\nabla \cdot}
	\newcommand{\lapl}{\Delta}
	\newcommand{\laplvec}{\mathbf{\Delta}}
	\newcommand{\unitmass}{\mathcal{M}}
	\newcommand{\unitdistance}{\mathcal{L}}
	\newcommand{\unittime}{\mathcal{T}}
	\newcommand{\unittemperature}{{\Theta}}
	\newcommand{\pr}{ \text{Pr}}
	\newcommand\fat[1]{\ThisStyle{\ooalign{
			\kern.46pt$\SavedStyle#1$\cr\kern.33pt$\SavedStyle#1$\cr
			\kern.2pt$\SavedStyle#1$\cr$\SavedStyle#1$}}}
	\DeclareDocumentCommand \fspace { o o m m O{} } 
	{ \IfNoValueTF {#1} 
	{ \IfNoValueTF {#2} { { {#3}^{#4}_{#5} } } {{}} {} }
	{ \if\relax\detokenize{#1}\relax { { {#3}^{#4}_{#5}}#2 } \else 
	{ \ifx\relax#2\relax ( {#3}^{#4}_{#5})^{#1}
  	\else { {#3}^{#4}_{#5} #2^{#1}} \fi }
  	\fi}
	}
	\DeclareDocumentCommand \Lp { O{} m} { \fspace[#1][]{L}{#2} }
	\DeclareDocumentCommand \Hs { O{} m} { \fspace[#1][]{H}{#2} }
	\DeclareDocumentCommand \Cs { O{} m} { \fspace[#1][]{C}{#2} }
	\DeclareDocumentCommand \Lpt{ O{} m m} { L^{#2}(#1;#3) }
	\DeclareDocumentCommand \Hst{ O{} O{} m m} { H^{#3}_{#2}(#1;#4) }
	\DeclareDocumentCommand \Cst{ O{} O{} m} { C^{#2}(#1;#3) }
    \DeclarePairedDelimiter\ceil{\lceil}{\rceil}
\begin{document}

\begin{frontmatter}



\title{A first order in time wave equation modeling nonlinear acoustics}


\author{Barbara Kaltenbacher, Pascal Lehner}

\affiliation{organization={Department of Mathematics},
            addressline={University of Klagenfurt}, 
            city={Klagenfurt},
            postcode={9020}, 
            state={Carinthia},
            country={Austria}}

\begin{abstract}
In this paper we focus on a small amplitude approximation of a Navier-Stokes-Fourier system modeling nonlinear acoustics. Omitting all third and higher order terms with respect to certain small parameters, we obtain a first order in time system containing linear and quadratic pressure and velocity terms.
 
Subsequently, the well-posedness of the derived system is shown using the classical method of Galerkin approximation in combination with a fixed point argument. We first prove the well-posedness of a linearized equation using energy estimates and then the well-posedness of the nonlinear system using a Newton-Kantorovich type argument. Based on this, we also obtain global in time well-posedness for small enough data and exponential decay. This is in line with semigroup results for a linear part of the system that we provide as well.

\end{abstract}

\begin{keyword}
nonlinear acoustics, well-posedness

\MSC 35L60 \sep 35L50

\end{keyword}


\end{frontmatter}


\section{Introduction}
Nonlinear acoustics has recently found much interest in the physics, engineering and mathematics community. This is driven by a multitude of applications of high intensity (focused) ultrasound ranging from medical therapy and imaging to industrial cleaning and welding, requiring simulation and optimization tools that need to be well based on a mathematical analysis of the underlying system and its models. In fact, also modeling itself is a highly active field of research.
For a recent review on the mathematics of nonlinear acoustics with some (but not complete, due to ongoing activities) references we refer to \cite{Kaltenbacher:2015}. 
Details on modeling can be found, for example, in the by now standard physics references \cite{Crighton79,Hamilton:1997}.

The first order in time system to be studied here derives from the mass conservation, momentum balance and entropy equation, complemented by appropriate constitutive equations relating pressure and temperature to density and entropy. 
The decisive difference between classical fluid mechanics, where the above mentioned balance equations arise as well, and nonlinear acoustics studied here, is the typical incompressibility assumption there, while a typical assumption in acoustics is irrotationality of the particle velocity as well as its vanishing mean or background value. While recently also compressibility in fluid mechanics has been studied intensively, see., e.g., \cite{BrezinaFeireislNovotny2020} and the references therein, we derive and analyze a system that is more targeted at nonlinear acoustics and in particular contains less unknowns.

While the mathematical literature for second order in time nonlinear wave equations is meanwhile quite extensive, much less work exists so far for first order in time systems; we particularly refer to \cite{DekkersRozanova2020,Tani:2017}. 
The quest for such first order formulations arises from their usefulness for the development of efficient numerical schemes, in particular adaptive space-time discretizations, where first order systems have already been proven to serve as good starting points for discontinuous Galerkin methods in case of linear acoustics, see, e.g., \cite{DoerflerFindeisenWienersZiegler:2019, bansal:2021}. 
There the authors rely on the linearity of the underlying equations, while a more general approach can be encountered in the field of fluid dynamics. There adaptive space-time discontinuous Galerkin methods for nonlinear Navier-Stokes systems are considered, see \cite{fambri:2020} and the references therein. 
Again, the fact that the system we study here is tailored to nonlinear acoustics and only contains the velocity and the pressure as variables is supposed to allow for increased efficiency of numerical schemes to be derived for them.

\medskip

The remainder of this paper is organized as follows.

In Section~\ref{sec:Derivation}, we derive a first order in time system from the above mentioned system of balance equations and constitutive relations, skipping higher than quadratic terms and using the substitution corollary that allows to insert linearized relations into nonlinear terms -- a procedure known in nonlinear acoustics as Blackstock's scheme, cf. \cite{Blackstock:63,Lighthill:56} and see also Remark~\ref{rem:substitutioncorollary}. 
Here we base the approximation on a non-dimensionalization of the physical quantities and consideration of its asymptotics for small Mach numbers.

Section~\ref{sec:Wellposedness} studies mathematical well-posedness of the initial boundary value problem for this system of PDEs; that is, we prove existence and uniqueness of solutions and provide estimates on a certain regularity level, depending on the smoothness of the initial data and forcing functions, that need to be small enough to enable local in time well-posedness -- a fact that is also known from second order wave equation models of nonlinear acoustics and that results from potential degeneracy of the equations. We also prove global in time well-posedness and exponential decay.

A short appendix type Section~\ref{sec:Semigroup} provides some basic facts on the semigroup pertaining to a linear part of the system.


\section{Derivation of model} \label{sec:Derivation}
\newcommand{\mach}{\varepsilon}
\newcommand{\etaa}{\eta}
\newcommand{\laplnew}{\lapl'}
\newcommand{\dtnew}{\partial_{\newt}}
\newcommand{\gradnew}{\grad'}
\newcommand{\divvnew}{\gradnew \cdot}
\newcommand{\tieunit}{[t]_u}
\newcommand{\spceunit}{[x]_u}
\newcommand{\presunit}{[\pres]_u}
\newcommand{\presnew}{\pres'}
\newcommand{\velounit}{[v]_u}
\newcommand{\velonew}{\velo'}
\newcommand{\entrunit}{[\entr]_u}
\newcommand{\entrnew}{\entr'}
\newcommand{\densunit}{[\dens]_u}
\newcommand{\densnew}{\dens'}
\newcommand{\tempunit}{[\temp]_u}
\newcommand{\tempnew}{\temp'}
\newcommand{\hunit}{[h]_u}
\newcommand{\gunit}{[g]_u}
\newcommand{\lunit}{[\ell]_u}
\newcommand{\laplvecnew}{\mathbf{\Delta'}}
\newcommand\StepSubequations{
	\stepcounter{parentequation}
	\gdef\theparentequation{\arabic{parentequation}}
	\setcounter{equation}{0}
}
In this section we derive the model to be discussed in this paper from a Navier-Stokes-Fourier system, describing the motion of a viscous and heat conducting fluid, cf. \cite[Section 3]{Hamilton:1997} or \cite{DekkersRozanova2020}.
This system consists of a mass, momentum and entropy balance equation supplemented by state equations. It reads
\begin{subequations} \label{NavierStokesFourierSystem}
\begin{align} 
	\dt \dens + \divv ( \dens \velo ) &= \forcep \label{eq:mass} \\
	\dens [ \dt \velo + (\velo \cdot \grad) \velo] 
	&= - \nabla \pres + \svis \laplvec \velo + \big(\frac{\svis}{3} + \bvis \big) \grad (\divv \velo )  + \forcev \\
	\dens \temp [ \dt \entr + ( \velo \cdot \nabla ) \entr ] 
	&= \kappa \lapl \temp + \frac{\svis}{2} \big( \sum_{i,j,k} \partial_{x_j} v_i + \partial_{x_i} v_j - \frac{2}{3} \delta_{ij} \partial_{x_k} v_k \big)^2 + \bvis (\divv \velo)^2 
+\forces
\label{eq:energy}  \\
	\pres &= \pres (\dens,\entr), \quad \temp = \temp(\dens,\entr), \label{eq:state} 
\end{align}
\end{subequations}
%
%
%
where $\delta_{ij}$ is the Kronecker-Delta.
The unknown physical quantities in \eqref{eq:mass}-\eqref{NSFReducedMomentumIrrotational} are the mass density $\dens$, velocity field $\velo:=( v_1, ..., v_d)^\transpose$, specific entropy $\entr$,  pressure $\pres$ and temperature $\temp$ of the acoustic medium. All these quantities are defined on a $\dime-$dimensional domain with $\dime \in \{2 , 3\}$ and will be equipped with suitable boundary and initial conditions later on in Section~\ref{sec:Wellposedness}. 

In \eqref{eq:mass}-\eqref{NSFReducedMomentumIrrotational} the shear viscosity $\svis$, bulk viscosity $\bvis$ and thermal conductivity $\thecon= {\svis c_p}{\pr}^{-1} $ are assumed to be constant and positive. Here $\pr$ denotes the Prandtl number and $c_p$ is the specific heat capacity. 
Additionally, we consider the external force terms $\forcep$ and $\forcev= (g_1, ..., g_d)^\transpose$ that can depend on space and time, respectively.

Furthermore, since we want to model nonlinear acoustics, we make the assumption of an irrotational flow, i.e. $ \grad \times \velo = 0$. With the vector calculus identities 
\begin{align}
\begin{split}
	\grad (\divv \velo) = \laplvec \velo  + \grad \times (\grad \times \velo) \\
	(\velo \cdot \grad) \velo = \frac{\grad}{2} \big( \velo^2 \big) + (\grad \times \velo ) \times \velo
	\end{split}
\end{align}
the conservation of momentum equation thus simplifies to \addtocounter{equation}{-1}
\begin{subequations}
	\addtocounter{equation}{3}
	\begin{align}
	\dens [ \dt \velo + \frac{\grad}{2}\big( \velo^2 \big) ] 
	= - \nabla \pres + \svis \nu \laplvec \velo + \forcev \label{NSFReducedMomentumIrrotational}
	\end{align}
\end{subequations}
with $\nu:= \big(\frac{4}{3} + \frac{\bvis}{\svis} \big)$ and $\velo^2:= \velo \cdot \velo = \sum_{i=1}^d v_i^2$.

Just for reference we provide a table of units for all physical quantities that we use. In the table $\unitdistance$ stands for a unit of length, $\unittime$ for time, $\unitmass$ for mass and for $\unittemperature$ temperature.
\bgroup
{\renewcommand{\arraystretch}{1.5}
	\begin{center}
		\begin{tabular}{ |c|c|c|c|c|c|c|c|c|c|c|c| } 
			\hline
			$\dens$   & 
			$v_i$   & 
			$\entr$   & 
			$\pres$   & 
			$\temp$   & 
			$\svis$   & 
			$\bvis$   & 
			$\thecon$ &
			$\forcep$ & 
			$g_i$ & 
			$\forces$ &
			$c_p$     \\ 
			$\frac{\unitmass}{\unitdistance^3}$
			& $\frac{\unitdistance}{\unittime}$ 
			& $\frac{\unitdistance^2}{\unittime^2 \unittemperature}$ 
			& $\frac{\unitdistance^2}{\unittime^2 \unittemperature}$ 
			& $\unittemperature$
			& $\frac{\unitmass}{\unitdistance \unittime}$ 
			& $\frac{\unitmass}{\unitdistance \unittime}$  
			& $\frac{\unitmass \unitdistance}{\unittime^3 \unittemperature}$ 
			& $\frac{\unitmass}{\unittime \unitdistance^3}$ 
			& $\frac{\unitmass}{\unittime^2 \unitdistance^2}$ 
			& $\frac{\unitmass }{\unittime^3 \unitdistance}$  
			& $\frac{\unitdistance^2}{  \unittime^2 \unittemperature}$ \\
			\hline
		\end{tabular}
	\end{center}
	\egroup
In the following, we model sound waves with small amplitude that travel through some medium. We assume that the medium is in an equilibrium state before the wave propagation starts and denote by $\dens_0$, $\velo_0$, $\entr_0, \pres_0$, $\temp_0$ the mean of the corresponding quantities in equilibrium. 
Furthermore, we assume that
\begin{align} \label{EquilibriumAssumptions}
\begin{split}
    \velo_0 &= 0, \\
	\dt \dens_0 &= \dt \entr_0 =\dt \pres_0 = \dt \temp_0 = 0, \\
	\grad \dens_0 &= \grad \entr_0 = \grad \pres_0 = \grad \temp_0 = 0 .
	\end{split}
\end{align}
A small perturbation of the system is {then created} by external sources $\forcep$, $\forcev$, $\forces$ resulting in a sound wave with small amplitude. Our goal is to approximate these kinds of systems up to quadratic order in terms of small parameters characterizing the perturbation. 

The first dimensionless small parameter we consider is the so called Mach number
\begin{equation} \label{DefintionMachNumber}
	\mach:= \frac{v_r}{c_0} ,
\end{equation}
where $c_0 :=  \sqrt{ (\partial_{p } \dens)_0 }$ denotes the speed of sound in the equilibrium state, $v_r$ is a scalar reference velocity and we use the short hand notation $(\partial_{z} \cdot)_0 := \partial_{z} \cdot |_{\dens=\dens_0,s=s_0}$ with $z \in \{ \dens, s \}$.  
The second dimensionless small parameter is the inverse of the acoustic Reynolds number
\begin{equation}
\eta:=\frac{k_r \svis}{c_0 \dens_0},
\end{equation}
where $k_{r}=:\frac{\omega_{r}}{c_0}$ is a reference wave number and $\omega_r$ a reference frequency. 

In the following, we perform a change of variables in \eqref{NavierStokesFourierSystem}, which introduces a linear perturbation in terms of the Mach number $\mach$ in every unknown and makes the equation dimensionless, see e.g. \cite{Tani:2017}. 
For this we define the new variables as
\begin{align} \label{NSFVariableTransformations}
\begin{split}
\newx &= \spceunit^{-1} x, 
\quad 
\newt = \tieunit^{-1} t, 
\quad
\newr( \newx, \newt) = \frac{ \dens( \spceunit^{-1} \spacevar, \tieunit^{-1} \timevar) - \dens_0}{ \mach \densunit },
\\ 
\newv( \newx, \newt) &= \frac{ \velo( \spceunit^{-1} \spacevar, \tieunit^{-1} \timevar) } { \mach \velounit },
\quad
\newent(\newx,\newt) = \frac{ \entr( \spceunit^{-1} \spacevar, \tieunit^{-1} \timevar) - \entr_0}{ \mach \entrunit },
\\
\newp( \newx, \newt) &= \frac{ \pres( \spceunit^{-1} \spacevar, \tieunit^{-1} \timevar) - \pres_0}{ \mach \presunit }, 
\quad
\newthe(\newx,\newt) = \frac{ \temp( \spceunit^{-1} \spacevar, \tieunit^{-1} \timevar) - \temp_0}{ \mach \tempunit }, 
\\
h'(\newx,\newt) &= \frac{h( k_r \spacevar, \omega_r t)} { \mach^2 \hunit }, 
\quad 
\fat{ g}' (\newx,\newt) = \frac{\fat{ g} (k_r \spacevar,\omega_r \timevar)} { \mach^2 \gunit },
\quad
\ell'(\newx,\newt) = \frac{\ell( k_r \spacevar, \omega_r t)} { \mach^2 \lunit , 
}
\end{split}
\end{align}
%
%
%
%
with corresponding weighted units
\begin{align} \label{units}
\begin{split}
 \spceunit := \frac{1}{k_r}, 
 \quad \tieunit := \frac{1}{\omega_{r}}, 
 \quad \densunit := \dens_0, 
 \quad \velounit:=c_0, 
 \quad \entrunit:= \frac{{ \dens_0 c_0^2 }}{(\partial_{ s } p)_0},  
 \quad \presunit:= \dens_0 c_0^2, \\
\tempunit:= \dens_0 ( \partial_{\dens} \temp)_0, 
 \quad \hunit:= \tieunit^{-1} \densunit , 
 \quad \gunit:= \tieunit^{-1} \densunit \velounit , 
\quad \lunit:= \tieunit^{-1} \densunit \tempunit \entrunit.
\end{split}
\end{align}
\begin{remark}
For the derivation of the particular model studied here it is essential to scale the source terms by $\eps^2$. In this case
the linear approximation derived in Section \ref{sec:LinearApprox} describes an equilibrium state, so that when using the substitution corollary cf. Remark \ref{rem:substitutioncorollary} in the derivation of the quadratic approximation, see Section \ref{sec:QuadraticApprox}, no additional terms depending on $h'$, $\fat{g'}$, $\ell$ arise. Scaling the source terms only by $\eps$ would result in a slightly different model containing overall more  terms.
\end{remark}
\subsection{Linear Approximation} \label{sec:LinearApprox}
For the purpose of clarity {and later use in the framework of the substitution corollary, cf. Remark~\ref{rem:substitutioncorollary},} we first derive a linear approximation of \eqref{eq:mass}-\eqref{NSFReducedMomentumIrrotational} that yields a classical linear wave equation. This means that we keep only terms up to $\bigO(\eps)$ and $\bigO(\etaa)$.

Plugging \eqref{NSFVariableTransformations} into the equation of conservation of mass \eqref{eq:mass} yields
\begin{align}
   \tieunit^{-1} \dtnew (\mach \densunit \newr +  \dens_0)  + \spceunit^{-1} \divvnew ( \dens_0 \mach \velounit \newv) = \bigO(\eps^2).
\end{align}
Using \eqref{EquilibriumAssumptions}, \eqref{units} and dividing by $\eps$ the expression can be simplified to
\begin{align}
\partial_{\newt} \newr  + \divvnew \newv =  \bigO(\eps).
\end{align}
The equation of conservation of momentum \eqref{NSFReducedMomentumIrrotational} gives
\begin{align}
\dens_0  \tieunit^{-1} \dtnew (\mach \velounit \newv) = - \spceunit^{-1} \gradnew (\mach \presunit \newp + \pres_0) + \bigO(\eps^2, \eps \eta).
\end{align}
Notice that, since $\svis = \etaa \spceunit \velounit \densunit$, we have $\svis \nu \laplvec \velo  = \bigO(\eps \eta)$.
Using \eqref{EquilibriumAssumptions}, \eqref{units} and dividing by $\eps$ yields
\begin{align}
\partial_{\newt} \newv + \gradnew p' =  \bigO(\eps, \eta).
\end{align}
Finally, the entropy equation \eqref{eq:energy} gives us
\begin{align}
\begin{split}
\dens_0 \temp_0 \tieunit^{-1} \dtnew (\mach \entrunit \newent)  + (\mach \densunit \dens' \temp_0 + \dens_0 \mach \tempunit \newthe + \temp_0 \dens_0 ) \tieunit^{-1} \partial_{\newt} \entr_0 
+ \dens_0 \temp_0  (\mach \velounit \newv \cdot \spceunit^{-1} \gradnew) \entr_0 \\
= \kappa \lapl \temp_0 + \bigO( \eps^2, \eps \eta).
\end{split}
\end{align}
Here we use that $\kappa \lapl (\temp - \temp_0) = \bigO(\eps \eta)$, since $\thecon = \etaa \spceunit \velounit \densunit c_p \pr^{-1}$.
The entropy equation can therefore and due to \eqref{EquilibriumAssumptions} be written as
\begin{align} \label{eq:linear_entropy}
\partial_{\newt} s' = \bigO(\eps, \eta).
\end{align}
We proceed with simplifying the state equations \eqref{eq:state} using a Taylor 
expansion at the point $(\rho, s) = (\rho_0 , s_0)$. 
The linearized state equation of $\pres$ is
\begin{align}
\pres - \pres_0 
	=  (\partial_{\dens } p)_0 (\dens-\dens_0) + (\partial_{ s } p)_0 (s-s_0) + \bigO(\dens^2, \dens s, s^2)
\end{align}
and in dimensionless form thus reads
\begin{align}
\mach \presunit p' = \mach (\partial_{\dens } p)_0 \densunit \dens' + \mach (\partial_{ s } p)_0 \entrunit   s' + \bigO(\mach^2),
\end{align}
resulting due to \eqref{units} in, simply
\begin{align} \label{eq:linear_pressure}
    p' = \dens' + s' + \bigO(\eps).
\end{align}
Similarly for $\temp$, we have
\begin{align}
	\temp - \temp_0 =  (\partial_{\dens } \temp)_0 (\dens-\dens_0) + (\partial_{ s } \temp)_0 (s-s_0) + \bigO(\dens^2, \dens s, s^2).
\end{align}
In dimensionless form it thus reads
\begin{align}
	 \eps \tempunit \newthe =  (\partial_{\dens } \temp)_0 ( \eps \densunit \dens') + (\partial_{  s} \temp)_0 (\eps \entrunit s') + \bigO(\eps^2).
\end{align}
So by using the following thermodynamic relation, see \cite{Tani:2017},
\begin{align}
	(\partial_{ s } \temp)_0 c_0^2 \big(\frac{\gamma-1}{\gamma}\big) 
	 = ( \partial_{ s } \pres)_0 ( \partial_{  \dens} \temp)_0,
\end{align}
where $\gamma$ denotes the heat capacity ratio,
we can write
\begin{align} 
\newthe = \dens' + \frac{\gamma s'}{\gamma -1}  + \bigO(\eps) = \newp + \frac{s'}{\gamma -1}  + \bigO(\eps).
\end{align}
{Here we have used \eqref{eq:linear_pressure}} to obtain the last expression.

As a last step we formulate \eqref{eq:mass} in terms of the pressure. Combining \eqref{eq:linear_entropy} and \eqref{eq:linear_pressure} simply gives 
\begin{align}
\partial_{\newt} \dens' = \partial_{\newt} \newp + \bigO(\eps, \eta).
\end{align} 
To summarize, we simplified the Navier-Stokes-Fourier system \eqref{eq:mass}-\eqref{NSFReducedMomentumIrrotational} to
\begin{align} \label{linearNSF}
\begin{split}
\partial_{\newt} \newp  + \divvnew \newv = 0 \\
\partial_{\newt} \newv + \gradnew \newp = 0 \\
\partial_{\newt} \newent = 0 \\
\newr = \newp - \newent , \quad \newthe 
= \newp + \frac{\newent}{\gamma - 1}
\end{split}
\end{align}
up to $\bigO(\eps, \eta)$ terms.
\begin{remark}
Taking the time derivative in the first equation and the divergence in the second equation in \eqref{linearNSF} and subtracting the expressions gives the classical dimensionless wave equation
\begin{align} \label{eq:linear_wave}
\dtnew^2 \newp - \laplnew \newp = 0.
\end{align}
\end{remark}
\subsection{Quadratic Approximation} \label{sec:QuadraticApprox}
In order to obtain a quadratic approximation of \eqref{eq:mass}-\eqref{NSFReducedMomentumIrrotational} we proceed along the lines of the last subsection with the crucial difference that we now keep terms of order $\eps^2$ and $\eps \eta$. 
\begin{remark}\label{rem:substitutioncorollary}
In the following we often refer to the substitution corollary. By this we simply mean that we can substitute linear relations e.g. \eqref{linearNSF} into terms of quadratic order $\eps^2$ or $\eps \eta$, since
{no information is lost on the relevant level of order.}
\end{remark}
By performing the same change of variables \eqref{NSFVariableTransformations} the equation of conservation of mass \eqref{eq:mass} gives the following additional terms on the left hand side of the equation
\begin{align}
\eps^2 ( \spceunit^{-1} \divvnew ( \densunit \newr \velounit \newv) -  \hunit h' ),
\end{align}
so that using \eqref{EquilibriumAssumptions}, \eqref{units}, dividing by $\mach$ and the product rule yields in total
\begin{align}\label{consmom_nondim}
	\partial_{\newt} \dens' + \divvnew \newv + \eps \dens' \divvnew \newv + \eps \gradnew \dens' \cdot \newv = \eps h' + \bigO(\eps^2).
\end{align}
The equation of conservation of momentum \eqref{NSFReducedMomentumIrrotational} gives the following quadratic terms on the left hand side of the equation
\begin{align}
\begin{split}
\eps^2  ( \densunit \dens' \tieunit^{-1} \velounit \partial_{\newt}  \newv + \dens_0 \frac{\velounit^2}{2 \spceunit} {\gradnew}  \newv^2 ) 
- \eps \etaa \nu \spceunit \velounit \densunit \spceunit^{-2} \velounit \laplvecnew  \newv - \eps^2 \gunit \fat{g'} .
\end{split}
\end{align}
So once again utilizing \eqref{EquilibriumAssumptions}, \eqref{units} and dividing by $\mach$ yields
\begin{align} \label{quadratic_momentum}
	\partial_{\newt} \newv + \eps \dens' \partial_{\newt} \newv + \frac{\eps}{2} \gradnew \newv^2 + \gradnew \pres' - \eta \nu \laplvec \newv = \eps\fat{g'} + \bigO(\eps^2, \eps \eta).
\end{align}
For the entropy equation \eqref{eq:energy} we obtain the following quadratic terms on the right hand side
\begin{align}
\begin{split}
&\mach^2 ( \densunit \dens' \temp_0 + \dens_0 \tempunit \newthe ) 
\left( \tieunit^{-1} \entrunit \dtnew s' + \spceunit^{-1} \velounit  \newv \cdot \gradnew \newent_0 \right) 
+ \mach^2 \densunit \newr \tempunit \newthe \tieunit^{-1} \dtnew \newent_0 
\\ &+ \eps^2 \dens_0 \temp_0 \spceunit^{-1} ( \velounit \newv \cdot \gradnew) \entrunit \newent 
  - \mach \etaa \spceunit \velounit \densunit c_p \pr^{-1} \spceunit^{-2} \tempunit \laplnew  \newthe
-\mach^2\lunit \ell'.
\end{split}
\end{align}
The substitution corollary allows us to substitute the linear relation of $s'$, see \eqref{linearNSF}, into the $\eps^2 \dens' \partial_{\newt} \newent$ and $\eps^2 \newthe \partial_{\newt} \newent$ terms, yielding that they vanish. Using \eqref{EquilibriumAssumptions}, \eqref{units} with $\lambda:= \frac{c_p \tempunit}{\Pr \temp_0 \entrunit}$ and dividing by $\mach$ we are left with
\begin{align}
	\partial_{\newt} \newent + \eps ( \newv \cdot \gradnew) \newent = \eta \lambda \laplnew \newthe 
+\mach \ell'
+ \bigO(\eps^2, \eps \eta).
\end{align}
Using the substitution corollary once again we can substitute the linear expression for $\newthe$ from \eqref{linearNSF} into $\eta \lapl \newthe$ and substitute $s'$ in quadratic terms with some constant in time entropy function $s_l(\spacevar)$, see \eqref{linearNSF}. This gives with $\sigma:= \frac{1}{\gamma -1}$
\begin{align} \label{eq:quadratic_entropy}
	\partial_{t'} s' = - \eps ( \newv \cdot \gradnew) s_l + \eta \lambda \laplnew (p' + \sigma {s_l}) +\mach \ell'
+ \bigO(\eps^2, \eps \eta).
\end{align}
%
%
For the state equation of $\pres$ we use the Taylor series up to second order
\begin{align}
\begin{split}
	\pres - \pres_0 
	=  (\partial_{\dens } p)_0 (\dens-\dens_0) + (\partial_{ s } p)_0 (s-s_0) + (\partial^2_{ \dens } p)_0 \frac{(\dens-\dens_0)^2}{2} \\ 
	+ (\partial^2_{ s } p)_0 \frac{(s-s_0)^2}{2} + (\partial_{ \dens } \partial_{ s } p)_0  \frac{(\dens-\dens_0)(s-s_0)}{2} 
	 +\bigO(\dens^3, \dens^2 s, \dens s^2, s^3).
	 \end{split}	
\end{align}
In dimensionless form
\begin{align} \label{taylorp}
	\pres ' = \dens' + s' + \eps \frac{B}{2A} \dens'^2 + \eps \frac{C'}{2} s'^2 + \eps \frac{D'}{2} \newr s' + \bigO(\eps^2),
\end{align}
where $A,B,C',D'$ are just shorthand notations for the corresponding coefficients. The fraction $\frac{B}{2A}$ is commonly known as the coefficient of nonlinearity in the literature of nonlinear acoustics, see for example \cite[Section 2]{Hamilton:1997}. 

Applying a time derivative to \eqref{taylorp} we obtain once again due to the substitution corollary using
{$ \newr  \ \dot{=} \ \newp - \newent $, $\partial_{ t'}s' \ \dot{=} \ 0$}, $s' \ \dot{=} \ s_l$} from \eqref{linearNSF}, where $\dot{=}$ stands for a linearized relation, in combination with \eqref{eq:quadratic_entropy}
\begin{align}
	\partial_{t'} \newp  = 
	\partial_{t'} \newr - \eps (\newv \cdot \gradnew) s_l + \eta \lambda \laplnew (\newp + \sigma {s_l}) 
+\mach \ell'\\
+ \eps \frac{B}{A} (\newp - s_l) \partial_{t'} \newp + \eps \frac{D'}{2} s_l \partial_{\newt} \newp 
+ \bigO(\eps^2, \eta \eps) .
\end{align}
By inserting the linear relation $  \partial_{ t'} p' \ \dot{=} \ - \divvnew \newv$ from \eqref{linearNSF} twice, we end up with 
\begin{align}
	\partial_{ t'} \dens' = \partial_{t'} p' + \eps (\newv \cdot \gradnew) s_l - \eta \lambda \laplnew  (p' + \sigma s_l)  
-\mach \ell'\\
	+ \eps \frac{B}{A} (p'-s_l) \divvnew \newv + \eps \frac{D'}{2} s_l \divvnew \newv  
+\bigO(\eps^2, \eta \eps) .
\end{align}
Plugging {this into \eqref{consmom_nondim} with $\newr \ \dot{=} \ \newp - s_l$ from \eqref{linearNSF}} and using cancellation of the $\mach ( \newv \cdot \gradnew s_l)$ terms gives
%
%
\begin{align}
\partial_{t'} p' - \eta \lambda \laplnew p'  + \divvnew \newv 
+ \eps \frac{B}{A} (p'-s_l) \divvnew \newv + \eps \frac{D'}{2} s_l \divvnew \newv
+ \eps (\newp - s_l) \divvnew \newv + \eps \gradnew \newp \cdot \newv \\
= \eps (h'+\ell') + \etaa \lambda \sigma \laplnew s_l + \bigO(\eps^2, \eps \etaa),
\end{align}
which can be simplified to
\begin{align}
\partial_{t'} p' - \eta \lambda \lapl p'  + (1+ \eps \alpha s_l )\divvnew \newv 
+ \eps \beta p' \divv \newv 
+ \eps \gradnew \newp \cdot \newv \\
= \eps (h'+\ell') + \etaa \lambda \sigma \laplnew s_l + \bigO(\eps^2, \eps \etaa)	
\end{align}
with $\alpha:= \frac{D'}{2} - \frac{B}{A} - 1$ and $\beta:= 1 + \frac{B}{A}$.
\\
Plugging $\newr \ \dot{=} \ \newp - s_l$ and $  \partial_{ t'} \newv \ \dot{=} \ - \grad \newp$ from \eqref{linearNSF} into  $\dens' \dtnew \velo'$ from \eqref{quadratic_momentum} yields
\begin{align}
	\partial_{\newt} \newv - \eps (\newp - s_l) \gradnew \newp + \frac{\eps}{2} \gradnew (\newv^2) + \gradnew \pres' - \eta \nu \laplvec \newv = \eps\fat{g'} + \bigO(\eps^2, \eps \eta).
\end{align}
Altogether after omitting the primes we obtain 
\begin{align} \label{2ndOrderNSF}
\begin{split}
	&\dt p - \eta \lambda \lapl p  + ( 1 + \eps \alpha s_l )\divv \velo + \eps \beta p \divv \velo + \eps \grad \pres \cdot \velo 
	= \eps (h+\ell) + \etaa \lambda \sigma \lapl s_l \\ 
	&\dt \velo  - \eta \nu \laplvec \velo + (1 + \eps s_l) \grad \pres  + \frac{\eps}{2} ( \grad \velo^2 - \grad \pres^2 ) = \eps\fat{g}
\end{split}	
\end{align}
up to $\bigO(\eps^2,\eps \eta)$. After solving this system for $p$ and $v$, one can also determine the entropy $s$ from 
\begin{equation}
	\dt s = - \eps \grad s_l \cdot \velo + \eta \lambda \lapl (p + \sigma s_l)+\eps\ell,
\end{equation}
cf. \eqref{eq:quadratic_entropy}, by plain time integration. Note that the value of $s_l$ can be obtained from initial data $s_l = s'(0) = \frac{s(0) - s_0}{\eps \entrunit}$ cf. \eqref{NSFVariableTransformations}. 
\begin{remark}
If we assume $\grad s_l = 0$ we get applying $(\dt,\divv)^\transpose$ to \eqref{2ndOrderNSF} and subtracting the corresponding equations the following dimensionless version of Kuznetsov's equation, see \cite{kuznetsov:1971}, 
\begin{align}
\dt^2 \pres - (1+ \mach (\alpha+1) s_l) \lapl \pres - \eta( \lambda + \nu ) \dt \lapl \pres - \eps \frac{B}{2A} \dt^2 ({p^2}) - \eps \dt^2 ({\velo^2}) = \mach \tilde{h}
\end{align}
with $\tilde{h}= \dt \forcep - \divv \forcev$ 
combined with
$\dt \velo + \grad \pres = 0$. 

Here we used the substitution corollary with the linear relations $ \laplvec \velo  \ \dot{=} \  \partial_{t}^2 \velo \ \dot{=} \ -  \dt\grad p$, $\divv \velo \ \dot{=} \ - \dt p$, $\grad p \ \dot{=} \ - \dt \velo$, $\lapl p^2 \ \dot{=} \ \dt^2 p^2$ and $\laplvec \velo^2 \ \dot{=} \ \dt^2 \velo^2$.
\end{remark}

\section{Well-posedness} \label{sec:Wellposedness}
\newcommand{\lincofp}{\gamma}
\newcommand{\lincofv}{\delta}
In this section we provide an analysis of the model \eqref{2ndOrderNSF} equipped with initial and boundary conditions, that is, of 
\begin{align} \label{2ndOrderNSF_ibvp}
\begin{split}
	\dt p - \etap \lapl p + (1+ \lincofp) \divv \fat{v} + \eps_1 p \divv \fat{v} + \eps_2 \grad p \cdot \fat{v} = \forcep \\
	\dt \fat{v}  - \etav \laplvec \fat{v} + (1+ \lincofv) \grad p - \frac{\eps_3}{2} \grad (p^2) + \frac{\eps_4}{2} \grad (\fat{v}^2)= \forcev \\
	(p(0),\fat{v}(0)) = (p_0, \fat{v_0}) \\
	p = 0, \quad\fat{v} = 0  \quad \text{ on } \partial \Omega
\end{split}	
\end{align}
on a bounded $C^{1,1}$ domain $\Omega \subset \R^{d}$ with $d \in \{2,3 \}$ and over a time interval $(0,T)$.
In order not to complicate arguments too much, we use homogeneous Dirichlet boundary conditions. 

In the following we introduce the notation used in this section. Let $Z$ be a Banach space and $T\in(0,\infty)$. 
%
%
We write $n:=d+1$ and for $p \in [1,\infty], s >0$ use the short hand notation $C(T;Z):=C([0,T];Z)$ as well as,
\begin{equation}
\begin{aligned}
 \Lp[n]{p} &:=  L^p(\Omega;\R^n) \qquad &
 \Hs[n]{s} &:=  H^s(\Omega;\R^n) \\
 \Lpt[T]{p}{Z}&:=  L^p(0,T;Z) &
 \Hst[T]{s}{Z}&:=  H^s(0,T;Z) .
\end{aligned}
\end{equation}
We use $\| u \|_{ \Lp[n]{p} } := \left( {  \sum_{i=1}^n ||u_i||_{\Lp{p}}^p } \right)^{\frac{1}{p}} $
and $ \| u \|_{\Hs[n]{k} } := \left( \sum_{ |\alpha| \leq k} \| D^\alpha  u \|_{\Lp[n]{2}}^2  \right)^{\frac{1}{2}}$ as standard norms for $k \in \N$, where $D^\alpha$ stands for the $\alpha$-th weak derivative. For matrix functions we use the entry wise matrix norm $ \| u \|_{\Lp[n \times d]{p}} := \left( {  \sum_{i=1}^n \sum_{j=1}^d ||u_{ij}||_{\Lp{p}}^p } \right)^{\frac{1}{p}} $. 
Moreover, we write $\| u \|^2_{\Hs[n]{1}} = \| u \|^2_{\Lp[n]{2}} + \| \gradvec u \|^2_{\Lp[n \times d]{2}} $, where $ \gradvec u$ denotes the Jacobian of $u$.

In order to obtain $T$ independent constants it is convenient to use, in place of the standard norm on $H^1(T;Z)$, 
\begin{equation}\label{normH1T}
\|u\|_{H^1(T;Z)}:=\|u(0)\|_Z+\|\dt u\|_{L^2(T;Z)},
\end{equation}
since it allows to obtain bounds just relying on an estimate of $\dt u$ and employing the initial condition.

\medskip

We now set the stage for an analysis of \eqref{2ndOrderNSF_ibvp}.
For
\begin{equation}
u:=(p,v)^\transpose \in X, \quad
f:= ( \forcep , \forcev)^\transpose \in L^2(T;(L^2)^n), \quad 
u_0:=(p_0,\fat{v_0})^\transpose \in (H^1_0)^n, \quad
\lincofp, \lincofv \in L^\infty
\end{equation}
and $X:=X_1 \cap X_2 \cap X_3$ with
\[
\begin{aligned}
&X_1 := \Hst[T]{1}{ \Lp[n]{2} }, \quad
X_2 := \Lpt[T]{2}{ (\Hs{2} \cap H^1_0 )^n }, \quad
X_3 := \Lpt[T]{\infty}{ \Hs[n]{1} }, \\
&Y := \Lpt[T]{2}{ \Lp[n]{2} }\times (H_0^1)^n,
\end{aligned}
\]
we consider the operator $\F: X  \to Y$ defined by
\begin{align}
	\begin{split}
	{\F}(u)= {\F}(p,\fat{v}) = 
	\left(
	\begin{pmatrix}
		\dt \pres - \etap \lapl p +  (1+ \lincofp) \divv \velo + \eps_1 p \divv \fat{v} + \eps_2 \grad \pres \cdot \fat{v} - \forcep  
	 \\
		\dt \fat{v}  - \etav \laplvec \fat{v} +  (1+ \lincofv ) \grad p - \frac{\eps_3}{2} \grad (p^2) + \frac{\eps_4}{2} \grad (\fat{v}^2) - \forcev
	\end{pmatrix},
u(0)-u_0 \right)
	\end{split}	
\end{align}
for some $\etap$, $\etav$, $\eps_i>0$, $i\in\{1,\ldots, 4\}$.
As norms on $X$, $Y$ we use 
\[
\| u \|^2_X := \sum_{i=1}^3 \| u\|^2_{X_i}, \qquad
\| (f,u_0)\|_Y^2 := \|f\|_{L^2(T;(L^2)^n)}^2+\|u_0\|^2_{(H_0^1)^n}.
\] 
Note that we incorporate the 
boundary conditions via the definition of the function spaces on which $\F$ acts.

Observe that $\F$ contains linear and bilinear terms in $u$, so that we can split it accordingly. Therefore, in the following we write 
\begin{align} \label{NSFMathematicalSystem}
	\F(u) = (\Lcal u,u(0)) + (\B[u,u],0) - (f,u_0),
\end{align}
where 
\begin{align}
	\begin{split}
		{\Lcal}u := \dt u - \A u  \text{ with } 
(\A u)(t)= \AAA(u(t)), \quad 
\AAA := \begin{pmatrix}
			\etap \lapl  &-  (1+ \lincofp)\divv  
			\\
			- (1 + \lincofv) \grad  & \etav \laplvec    
		\end{pmatrix}
	\end{split}	
\end{align}  
and
\begin{align}
	\begin{split}
		&({\B}[ (p,\fat{v}), (q, \fat{w}) ])(t)=\, \BBB[(p(t),\fat{v}(t)), (q(t), \fat{w}(t))], \qquad
\BBB :=		\begin{pmatrix}
			\BBB^1 + \BBB^2
			\\
		\BBB^3 + \BBB^4
		\end{pmatrix}, \\
&\BBB[(\mathrm{p},\mathrm{v}), (\mathrm{q},\mathrm{w})]
	:= 	\begin{pmatrix}
			\eps_1 \mathrm{p} \divv \mathrm{w} + \eps_2 \grad \mathrm{q} \cdot \mathrm{v} 
			\\
			- \frac{\eps_3}{2} \grad (\mathrm{p} \mathrm{q}) + \frac{\eps_4}{2} \grad (\mathrm{v} \cdot \mathrm{w}) 
		\end{pmatrix} \qquad (\mathrm{p}, \mathrm{v})^\transpose , (\mathrm{q}, \mathrm{w})^\transpose \in (H^2)^n.
	\end{split}
\end{align} 
In the following we frequently use H\"older's inequality 
\begin{align}
	\| w v \|_{ \Lp{r} } \leq \| w \|_{ \Lp{p} } \| v\|_{ \Lp{q}} \quad w\in \Lp{p}, \ v\in \Lp{q}, \quad \frac{1}{r} = \frac{1}{p} + \frac{1}{q} 
\end{align}
for $p,q,r\in[1,\infty]$
and Young's inequality 
\begin{align}
	a b \leq \frac{\delta a^2 }{2} + \frac{b^2}{2 \delta} \quad a,b,\delta>0,
\end{align}
as well as Poincare's inequality 
\begin{align}
	\| v \|_{\Lp{2}} \leq C_P \| \grad v \|_{ \Lp[d]{2}} \quad v \in H^1_0
\end{align}
and continuity of the embeddings 
$\Hs{1} \embedded \Lp{4}$, $\Hs{2} \embedded \Lp{\infty}$ for $d \leq 3$ implying 
\[
\begin{aligned}
	&\| v \|_{\Lp{4}} \leq C_Q \| \grad v  \|_{\Lp[d]{2}} \quad &&v \in H^1_0, \\
	&\| \grad w \|_{\Lp[d]{4}} \leq C_R \| \lapl w  \|_{\Lp{2}} \quad &&w \in H^2\cap H^1_0, \\
	&\| w \|_{\Lp{\infty} } \leq C_S \| \lapl w\|_{\Lp{2}} &&w \in H^2\cap H^1_0,
\end{aligned}
\]
where in the latter case we have also combined with elliptic regularity 
\begin{align}
\| w \|_{H^2} \leq C_\lapl \| \lapl w\|_{\Lp{2}} \qquad w \in H^2\cap H^1_0.
\end{align}
Note that we use these inequalities also for vector valued functions with the same constants 
notionally. In this case $\grad$ is replaced with $\gradvec$ and $\lapl$ with $\laplvec$.

The following lemma 
is a direct consequence of the above inequalities and ensures the continuity of the quadratic terms.
\begin{lemma} \label{NSFquadraticTermEstimateLemma}
There exists $C_\B>0$ independent of $T$ such that for all $$(p,\velo)^\transpose,(q,\fat{w})^\transpose \in \tilde{X}:= X_2 \cap X_3$$
we have
\begin{align}\label{est_quadterm}
\| {\B}[ (p,\fat{v}), (q, \fat{w}) ] \|_{L^2(T;(L^2)^n)}
\leq  C_\B |\eps|  \| (p, \velo) \|_{\tilde{X}}  \| (q, \fat{w} ) \|_{\tilde{X}}
\end{align}
where $|\eps|^2:=\eps_1^2+\eps_2^2+\eps_3^2+\eps_4^2$.
\end{lemma}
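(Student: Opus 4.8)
The plan is to estimate each of the four components $\BBB^1,\dots,\BBB^4$ of the bilinear form separately in the $L^2(T;L^2)$ norm, bound each by a product of the $\tilde X$-norms of the two arguments, and then combine. Recall $\tilde X = X_2\cap X_3$, so an element $(p,\velo)$ of $\tilde X$ satisfies $p\in L^2(T;H^2\cap H^1_0)$, $p\in L^\infty(T;H^1)$, and likewise for $\velo$. The guiding principle throughout is to put one factor in an $L^\infty$-in-time / higher-spatial-regularity slot (coming from $X_2$, hence $C_R$ and $C_S$) and the other factor in an $L^\infty(T;H^1)$ slot (coming from $X_3$, hence $C_Q$), so that the time integral $\int_0^T$ only ever hits the $X_2$-factor squared, giving a genuinely $T$-independent constant.

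Concretely, for $\BBB^1 = \eps_1\, \mathrm p\,\divv\mathrm w$, I would write, pointwise in $t$, $\|\mathrm p\,\divv\mathrm w\|_{L^2}\le \|\mathrm p\|_{L^\infty}\|\divv\mathrm w\|_{L^2}\le C_S\|\lapl\mathrm p\|_{L^2}\,\|\gradvec\mathrm w\|_{L^2}$, then take $\|\gradvec\mathrm w\|_{L^2}$ in $L^\infty(0,T)$ (controlled by $\|(\mathrm q,\mathrm w)\|_{X_3}\le\|(\mathrm q,\mathrm w)\|_{\tilde X}$) and $\|\lapl\mathrm p\|_{L^2}$ in $L^2(0,T)$ (controlled by $\|(\mathrm p,\mathrm v)\|_{X_2}\le\|(\mathrm p,\mathrm v)\|_{\tilde X}$). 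For $\BBB^2 = \eps_2\,\grad\mathrm q\cdot\mathrm v$, use Hölder with exponents $4,4$: $\|\grad\mathrm q\cdot\mathrm v\|_{L^2}\le\|\grad\mathrm q\|_{L^4}\|\mathrm v\|_{L^4}\le C_R\|\lapl\mathrm q\|_{L^2}\,C_Q\|\gradvec\mathrm v\|_{L^2}$, then integrate in time with $\|\lapl\mathrm q\|_{L^2}\in L^2$ (from $X_2$ of the second argument) and $\|\gradvec\mathrm v\|_{L^2}\in L^\infty$ (from $X_3$ of the first argument). The terms $\BBB^3=-\tfrac{\eps_3}{2}\grad(\mathrm p\mathrm q)$ and $\BBB^4=\tfrac{\eps_4}{2}\grad(\mathrm v\cdot\mathrm w)$ are handled identically to one another: by the product rule $\grad(\mathrm p\mathrm q)=\mathrm q\,\grad\mathrm p+\mathrm p\,\grad\mathrm q$, and each summand is estimated exactly as $\BBB^2$ was (one factor's gradient in $L^4$ via $C_R$ and $X_2$, the other factor in $L^4$ via $C_Q$ and $X_3$), being careful to note that here both summands of $\grad(\mathrm p\mathrm q)$ should be arranged so the $X_2$-norm falls on a derivative of one single argument and the $X_3$-norm on the other — e.g. bound $\|\mathrm q\,\grad\mathrm p\|_{L^2}\le C_Q\|\gradvec\mathrm q\|_{L^2}\,C_R\|\lapl\mathrm p\|_{L^2}$ with $\|\lapl\mathrm p\|_{L^2}\in L^2$, and symmetrically for $\|\mathrm p\,\grad\mathrm q\|_{L^2}$; this keeps every time integral $T$-independent.

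Having bounded each $\|\BBB^i[\cdot,\cdot]\|_{L^2(T;L^2)}$ by a constant (a product of $C_S$, $C_R$, $C_Q$, $C_\lapl$ and an absolute numeric factor) times $\eps_i$ times $\|(\mathrm p,\mathrm v)\|_{\tilde X}\|(\mathrm q,\mathrm w)\|_{\tilde X}$, I would sum over $i=1,\dots,4$, using the definition $\|\B[\cdot,\cdot]\|_{L^2(T;(L^2)^n)}^2$ as the sum of the squared component norms and a Cauchy–Schwarz step in the index $i$ to pull out $|\eps|=(\eps_1^2+\eps_2^2+\eps_3^2+\eps_4^2)^{1/2}$, and set $C_\B$ to be the resulting maximum of the geometric constants (times an absolute factor from the sum). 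Since every constant invoked — $C_P,C_Q,C_R,C_S,C_\lapl$ and the Sobolev/Poincaré embedding constants — depends only on $\Omega$ and $d$, and no time integral ever saw more than one $L^2(0,T)$ factor, $C_\B$ is independent of $T$, as claimed.

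The only real subtlety, and the place I would be most careful, is bookkeeping the "one factor in $L^\infty_t$, one in $L^2_t$" split consistently across all six bilinear expressions (four components, with $\BBB^3,\BBB^4$ each splitting into two via the product rule), so that no estimate accidentally needs $\|\lapl\mathrm p\|_{L^2}$ in $L^\infty(0,T)$ (which $X_2$ does not provide) or introduces a factor of $T$ or $\sqrt T$. Everything else is a direct application of Hölder, the stated embeddings $H^1\embedded L^4$ and $H^2\embedded L^\infty$, and elliptic regularity, exactly as the paragraph preceding the lemma advertises.
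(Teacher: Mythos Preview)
Your proposal is correct and follows essentially the same approach as the paper: H\"older's inequality plus the embeddings $H^1\hookrightarrow L^4$, $H^2\hookrightarrow L^\infty$ and elliptic regularity, always placing one factor in $L^\infty_t$ via $X_3$ and the other in $L^2_t$ via $X_2$ so that no $T$-dependence enters. The only cosmetic difference is your treatment of $\BBB^1$, where you use the $L^\infty\times L^2$ pairing (via $C_S$) with $p$ in $X_2$ and $\welo$ in $X_3$, whereas the paper uses the $L^4\times L^4$ pairing (via $C_Q,C_R$) with the roles of the two arguments swapped; both are valid and lead to the same conclusion.
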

\begin{proof}
We estimate the first two components of $\B$ in $ \Lpt[T]{2}{ \Lp{2} } $, which yields 
\[
\begin{aligned}
	&\frac{1}{\eps_1^2}\int_0^T \| \BBB^1 \|_{\Lp{2}}^2 dt 
	\leq \int_0^T \| \pres \|_{\Lp{4}}^2 \| \divv \welo \|_{\Lp{4}}^2 dt 
	\leq   C_Q^2 \int_0^T \| \grad \pres \|_{\Lp[d]{2}}^2 \| \gradvec \welo \|_{\Lp[d \times d]{4}}^2 dt	\\
	&\leq   C_Q^2 C_R^2  \sup_{ t \in [0,T]}\| \grad \pres \|^2_{\Lp[d]{2}} \int_0^T \| \laplvec \welo \|_{\Lp[d]{2}}^2  dt 
	\leq C_1^2 \|  (\pres, \velo) \|_{{\tilde{X}}}^2 \| (\qres, \welo) \|_{{\tilde{X}}}^2
\end{aligned}
\]
and similarly
\[
\begin{aligned}
&\frac{1}{\eps_2^2}\int_0^T \| \BBB^2 \|_{\Lp{2}}^2 dt    
\leq d  \int_0^T \| \grad \qres \|_{\Lp[d]{4}}^2 \|  \velo \|_{\Lp[d]{4}}^2 dt  
\leq d  C_R^2 \int_0^T \| \lapl \qres \|_{\Lp{2}}^2 \|  \velo \|_{\Lp[d ]{4}}^2 dt  \\
&		\leq d  C_Q^2 C_R^2  \sup_{ t \in [0,T]} \| \gradvec \velo \|^2_{\Lp[d\times d]{2}} \int_0^T \| \lapl \qres \|_{\Lp{2}}^2 dt
		\leq C_2^2 \|  (\pres, \velo) \|_{{\tilde{X}}}^2 \| (\qres, \welo) \|_{{\tilde{X}}}^2.
\end{aligned}
\]
The last two terms are estimated in $\Lpt[T]{2}{(L^2)^d}$ giving
\[
\begin{aligned}
&\frac{4}{\eps_3^2} \int_0^T \| \BBB^3 \|_{\Lp[d]{2}}^2 dt 
		\leq  d \int_0^T \Bigl(\| \grad \pres \|_{\Lp[d]{4}}^2 \|  \qres \|_{\Lp{4}}^2 + \| \grad \qres \|_{\Lp[d]{4}}^2 \|  \pres \|_{\Lp{4}}^2\Bigr)  dt \\
&		\leq d C_Q^2 C_R^2 \Bigl( \sup_{t \in  [0,T]} \|  \grad \qres \|_{ {\Lp[d]{2}} }^2\int_0^T \| \lapl \pres \|_{\Lp{2}}^2  dt + \sup_{t \in [0,T]}  \|  \grad \pres \|_{{\Lp[d]{2}}}^2 \int_0^T \| \lapl \qres \|_{\Lp{2}}^2   dt \Bigr)\\
&		\leq 4 C_3^2 \|  (\pres, \velo) \|_{\tilde{X}}^2 \| (\qres, \welo) \|_{{\tilde{X}}}^2
\end{aligned}
\]
and similarly
\[
\begin{aligned}
&	\frac{4}{\eps_4^2} \int_0^T \| \BBB^4 \|_{\Lp[d]{2}}^2 dt 
		\leq d \sum_{i=1}^d \int_0^T \Bigl(\| \grad \velo_i \welo_i  \|_{\Lp{2}}^2 + \| \grad \welo_i \velo_i  \|_{\Lp{2}}^2\Bigr)  dt \\
&		\leq d^2 C_Q^2 C_R^2  \Bigl( \sup_{t \in  [0,T]} \|  \gradvec \welo \|^2_{\Lp[d \times   d]{2}} \int_0^T \| \laplvec \velo \|_{\Lp[d]{2}}^2  dt +  \sup_{ t \in [0,T]} \|  \gradvec \velo \|^2_{\Lp[d \times   d]{2}} \int_0^T \| \laplvec \welo \|_{\Lp[d]{2}}^2 dt \Bigr) \\
&		\leq  4 C_4^2 \|  (\pres, \velo) \|_{{\tilde{X}}}^2 \| (\qres, \welo) \|_{{\tilde{X}}}^2 .
\end{aligned}
\]
Taking square roots and summing up the estimates we obtain \eqref{est_quadterm} with 
\\
$C_\B:= $ $4 \max \{C_1,C_2, C_3, C_4\}$.
\end{proof}
Clearly, the above Lemma remains valid with $\tilde{X}$ replaced by $X$.
With this result at hand, Fr\'{e}chet differentiability of $\F$ and Lipschitz continuity of the Fr\'{e}chet derivative are straightforward.
\begin{lemma} \label{NSFDifferentaibleLemma}
	The Fr\'{e}chet derivative of $\F:X\to Y$ at $u \in X$ is given by 	
\begin{align}\label{Fprime}
	\Fprime{u}{h} = (\Lcal h + \B[u,h] + \B[h,u],h(0))
	\end{align}
for $h \in X$. 
Moreover, the Lipschitz estimate 
\begin{align}\label{LipschitzFprime}
	\|\Fprime{u_1}{h}-\Fprime{u_2}{h}\|_Y\leq 2C_\B|\eps| \|u_1-u_2\|_{\tilde{X}} \|h\|_{\tilde{X}} 
	\end{align}
holds.
\end{lemma}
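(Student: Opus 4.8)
The statement is Lemma~\ref{NSFDifferentaibleLemma}: identify the Fréchet derivative of $\F$ and prove the Lipschitz estimate for $\Fprime{u}{\cdot}$. The key structural observation is already recorded in \eqref{NSFMathematicalSystem}: $\F$ decomposes into an affine part $(\Lcal u, u(0)) - (f,u_0)$ and a purely quadratic part $(\B[u,u],0)$, where $\B$ is the bilinear form from Lemma~\ref{NSFquadraticTermEstimateLemma}. Since affine maps are their own derivatives (the derivative of $u\mapsto(\Lcal u,u(0))-(f,u_0)$ at any point is $h\mapsto(\Lcal h,h(0))$), everything reduces to differentiating $u\mapsto\B[u,u]$. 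So the first step is to write, for $u,h\in X$,
\[
\B[u+h,u+h]-\B[u,u]=\B[u,h]+\B[h,u]+\B[h,h],
\]
using bilinearity of $\B$; this is an exact identity, no approximation involved. The candidate derivative is thus $h\mapsto(\Lcal h+\B[u,h]+\B[h,u],h(0))$, which is exactly \eqref{Fprime}.

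**Verifying it is the Fréchet derivative.** I must check three things: (i) the candidate map $h\mapsto\Fprime{u}{h}$ is a bounded linear operator $X\to Y$; (ii) the remainder is $o(\|h\|_X)$; (iii) $u\mapsto\Fprime{u}{\cdot}$ is the genuine derivative (continuity of $h\mapsto\Fprime{u}{h}$ in the operator norm and measurability are not at issue here). For (i), linearity in $h$ is clear from bilinearity of $\B$ and linearity of $\Lcal$; boundedness follows because $\Lcal:X\to Y$ is bounded (the linear PDE operator maps into $L^2(T;(L^2)^n)$ and $h\mapsto h(0)$ maps $X_1=H^1_0$-valued continuously into $(H^1_0)^n$ by the trace/embedding $H^1(T;(L^2)^n)\cap X_3\embedded C(T;\Hs[n]{1})$), and because Lemma~\ref{NSFquadraticTermEstimateLemma} gives $\|\B[u,h]\|_{L^2(T;(L^2)^n)}\le C_\B|\eps|\|u\|_{\tilde X}\|h\|_{\tilde X}\le C_\B|\eps|\|u\|_X\|h\|_X$ and symmetrically for $\B[h,u]$. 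For (ii), the remainder is precisely $(\B[h,h],0)$, and Lemma~\ref{NSFquadraticTermEstimateLemma} bounds its $Y$-norm by $C_\B|\eps|\|h\|_{\tilde X}^2\le C_\B|\eps|\|h\|_X^2=o(\|h\|_X)$ as $\|h\|_X\to0$. This establishes \eqref{Fprime}.

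**The Lipschitz estimate.** For $u_1,u_2,h\in X$, using \eqref{Fprime} the $\Lcal h$ and $h(0)$ contributions cancel, leaving
\[
\Fprime{u_1}{h}-\Fprime{u_2}{h}=\bigl(\B[u_1-u_2,h]+\B[h,u_1-u_2],\,0\bigr).
\]
Hence $\|\Fprime{u_1}{h}-\Fprime{u_2}{h}\|_Y=\|\B[u_1-u_2,h]+\B[h,u_1-u_2]\|_{L^2(T;(L^2)^n)}$, and applying Lemma~\ref{NSFquadraticTermEstimateLemma} to each of the two terms (with the bilinear arguments $u_1-u_2$ and $h$ in the respective slots) together with the triangle inequality yields the bound $2C_\B|\eps|\,\|u_1-u_2\|_{\tilde X}\|h\|_{\tilde X}$, which is exactly \eqref{LipschitzFprime}.

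**Main obstacle.** There is no real analytic obstacle — the whole lemma is a formal consequence of bilinearity of $\B$ plus the single estimate of Lemma~\ref{NSFquadraticTermEstimateLemma}. The only point requiring a little care is making sure every ingredient of $\Fprime{u}{h}$ genuinely lands in $Y$ with a $T$-independent bound: in particular that $h\mapsto h(0)$ is well-defined and bounded on $X$ (via the embedding $X\embedded C(T;\Hs[n]{1})$, which also underlies the norm \eqref{normH1T}) and that $\B[u,h]$, $\B[h,u]$ are controlled by the $\tilde X=X_2\cap X_3$ (hence $X$) norms as in the lemma. Once these mapping properties are in place, differentiability and the Lipschitz bound drop out immediately.
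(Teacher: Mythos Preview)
Your proposal is correct and follows essentially the same approach as the paper: both use the bilinearity of $\B$ to identify the remainder as $\B[h,h]$, bound it via Lemma~\ref{NSFquadraticTermEstimateLemma}, and obtain the Lipschitz estimate from the identity $\B[u_1,h]+\B[h,u_1]-\B[u_2,h]-\B[h,u_2]=\B[u_1-u_2,h]+\B[h,u_1-u_2]$ together with another application of that lemma. If anything, you are slightly more explicit than the paper in verifying that the candidate derivative is a bounded linear map into $Y$.
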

\begin{proof}
	We check that each term is Fr\'{e}chet differentiable. For the linear terms this is clear. For the quadratic terms we can use the bilinearity in combination with Lemma \ref{NSFquadraticTermEstimateLemma} and get for $h \in X$ with $\tilde{Y}:=L^2(T;(L^2)^n)$
\begin{align}
\begin{split}
 \| \B[u + h, u + h] - \B[ u, u ] - \B[u, h] - \B[h , u] \|_{\tilde{Y}}   
= \| \B[h,h] \|_{\tilde{Y}} \leq C_\B|\eps| \| h \|_X^2. 
\end{split}
\end{align}
Thus with $\Fprime{u}{h}$ as in \eqref{Fprime},
\begin{align}
\frac{ \| \F(u+h) - \F(u) - \Fprime{u}{h} \|_Y}{\|h\|_X} \leq C_\B |\eps| \| h\|_X \to 0 
\text{ as }\|h \|_X \to 0.
\end{align}
The Lipschitz estimate \eqref{LipschitzFprime} follows from the identity  
\[
\B[u_1,h] + \B[h,u_1] - \B[u_2,h] - \B[h,u_2]
=\B[u_1-u_2,h] + \B[h,u_1-u_2]
\]
and Lemma~\ref{NSFquadraticTermEstimateLemma}.
\end{proof}
Note that with $h:=(h_p, \fat{h_v})^\transpose$ we have
\begin{align}
\B[u,h] + \B[h,u] =
\begin{pmatrix}
			\eps_1 (  h_p \divv \fat{v} + p \divv \fat{  h_v }) + \eps_2 (\grad  h_p \cdot \fat{v} + \grad p \cdot \fat{ h_v} ) \\
			- {\eps_3} \grad (p  h_p) + {\eps_4} \grad (\fat v \cdot \fat{ h_v}) 
		\end{pmatrix}	.
\end{align}

\medskip

Our goal is to prove solvability of $\F(u)=0$ by using a Newton-Kantorovich type fixed point argument, cf. e.g., \cite{Ortega1968}. 
Thus, in order to define a fixed point equation for $u$, we consider the Newton step starting from some point $u_*$, which is formally given by 
\begin{align}
u = u_*  -  \F'^{-1}_{u_*} \F(u_*).
\end{align}
Using Lemma \ref{NSFDifferentaibleLemma}, 
this can be reformulated as
\[
0 =	\Fprime{u_*}{u} - \Fprime{u_*}{u_*} +	{\F}(u_*)  
%
= (\Lcal u + \B[u_*,u] + \B[u,u_*] - \B[u_*,u_*] - f, u(0)-u_0)
\]
or in short
\begin{align} \label{linear system newton}
	u =  \mathcal{T}(u_*):=\F'^{-1}_{u_*} \tilde{f}(u_*)
\end{align}
with
\begin{align}\label{ftilde}
	\tilde{f}(u_*) := (\B[u_*,u_*] + f,u_0) = \left(\begin{pmatrix}
		\eps_1 p_* \divv \fat{v_*} + \eps_2 \grad p_* \cdot \fat{v_*} \\
		- \frac{\eps_3}{2} \grad (p_*^2) + \frac{\eps_4}{2} \grad(\fat{v_*}^2) 
	\end{pmatrix} + f, \ u_0\right),
\end{align}
which we regard as the linearized version of system \eqref{NSFMathematicalSystem}.

The key steps to make the fixed point formulation \eqref{linear system newton} rigorous (and with $u=u_*$ equivalent to $\F(u)=0$) are to show that 
\begin{itemize}
\item $\F'_{u_*}$ is boundedly invertible $ \F'^{-1}_{u_*} \in \linearspace(Y,X)$
for all $u_*$ sufficiently small, which we state in Theorem~\ref{Theorem Linearized wellposed} and prove in Section~\ref{sec:proofthmlin}; 
\item $\tilde{f}(u_*)$ is well-defined by \eqref{ftilde} for all $u_*$ sufficiently small, which we state in the next lemma.
\end{itemize}

As an immediate consequence of Lemma~\ref{NSFquadraticTermEstimateLemma} and \eqref{ftilde} we have the following.
\begin{lemma} \label{Lemma f tilde}
	Given $u_*\in X$ and $(f,u_0) \in Y$ we have that $\tilde{f}(u_*) \in Y$ with  
	\begin{align}
		\| \tilde{f}(u_*) \|^2_Y \leq { (C_\B |\eps| \| u_* \|_X^2 + \| f\|_{L^2(T;(L^2)^n)} )^2
+\|u_0\|^2_{(H_0^1)^n} }.
	\end{align}
\end{lemma}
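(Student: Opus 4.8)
The plan is to derive the bound on $\|\tilde f(u_*)\|_Y$ directly from the definition \eqref{ftilde} together with the quadratic estimate of Lemma~\ref{NSFquadraticTermEstimateLemma}. Recall that $\tilde f(u_*) = (\B[u_*,u_*] + f, u_0)$, and that the norm on $Y$ splits as $\|(f,u_0)\|_Y^2 = \|f\|_{L^2(T;(L^2)^n)}^2 + \|u_0\|_{(H_0^1)^n}^2$. So the first step is simply to write
\begin{align*}
\|\tilde f(u_*)\|_Y^2 = \|\B[u_*,u_*] + f\|_{L^2(T;(L^2)^n)}^2 + \|u_0\|_{(H_0^1)^n}^2.
\end{align*}

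For the first summand I would apply the triangle inequality in $L^2(T;(L^2)^n)$ to get $\|\B[u_*,u_*] + f\|_{L^2(T;(L^2)^n)} \le \|\B[u_*,u_*]\|_{L^2(T;(L^2)^n)} + \|f\|_{L^2(T;(L^2)^n)}$, and then invoke Lemma~\ref{NSFquadraticTermEstimateLemma} with $(p,\velo) = (q,\fat w) = u_*$ (using the remark immediately after that lemma that it remains valid with $\tilde X$ replaced by $X$) to bound $\|\B[u_*,u_*]\|_{L^2(T;(L^2)^n)} \le C_\B |\eps|\, \|u_*\|_X^2$. Combining these two inequalities gives $\|\B[u_*,u_*] + f\|_{L^2(T;(L^2)^n)} \le C_\B |\eps|\, \|u_*\|_X^2 + \|f\|_{L^2(T;(L^2)^n)}$, and squaring and adding $\|u_0\|_{(H_0^1)^n}^2$ yields precisely the claimed estimate. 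The membership $\tilde f(u_*)\in Y$ follows since the right-hand side is finite whenever $u_*\in X$ and $(f,u_0)\in Y$.

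There is essentially no obstacle here: the lemma is labelled in the excerpt as "an immediate consequence" of Lemma~\ref{NSFquadraticTermEstimateLemma} and \eqref{ftilde}, and indeed the only ingredients are the definition of the $Y$-norm, the triangle inequality, and the already-proved continuity of the bilinear form $\B$. The one point worth a word of care is that the estimate of Lemma~\ref{NSFquadraticTermEstimateLemma} was stated for the norm $\|\cdot\|_{\tilde X}$ on $\tilde X = X_2\cap X_3$, whereas here we need it on the full space $X = X_1\cap X_2\cap X_3$ with norm $\|\cdot\|_X$; this is exactly covered by the sentence "Clearly, the above Lemma remains valid with $\tilde X$ replaced by $X$," since $\|u\|_{\tilde X}\le \|u\|_X$. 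No smallness of $u_*$ is required for this particular statement — smallness only enters later when one wants $\tilde f$ to map a suitable ball into itself.
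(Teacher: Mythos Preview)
Your proof is correct and is exactly the argument the paper has in mind: the lemma is stated in the paper as ``an immediate consequence of Lemma~\ref{NSFquadraticTermEstimateLemma} and \eqref{ftilde}'' with no further proof given, and your expansion via the triangle inequality and the $\tilde X \subseteq X$ remark fills in precisely the intended steps.
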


To show bounded invertibility of $\F'_{u_*}$, we will need to impose smallness of $u_*$. For this purpose we denote the standard balls in the $X$-norm by 
\[
\begin{aligned}
&B^{X}_{r}(u_*) :=  \{ u \in X : \| u -u_*\|_{X} \leq r\}, \quad 
B^{X}_{r} := B^{X}_{r}(0), \\
&U^{X}_{r}(u_*) :=  \{ u \in X : \| u -u_*\|_{X} < r\}, \quad 
U^{X}_{r} := U^{X}_{r}(0) 
\end{aligned}
\]
for some $r>0$.
\begin{theorem}[Linearized well-posedness] \label{Theorem Linearized wellposed}
	Let $T>0$. Then there exist $r, \tilde{r} >0$ and $C_G>0$ independent of $T$ such that for each $u_* \in B^{X}_{r}$, $(f,u_0) \in Y$ and $\lincofp, \lincofv \in B_{\tilde{r}}^{L^\infty}$ there is a unique $u \in X$ with 
	\begin{align} \label{WeakSolution}
	\Fprime{u_*}{u} = (f,u_0) 
	\end{align}
	and $u$ satisfies the a priori estimate 
	\begin{align}\label{apriori}
		\| u \|_X \leq C_G \|(f,u_0)\|_Y .
	\end{align}
\end{theorem}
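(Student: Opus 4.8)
# Proof proposal for Theorem 3.1 (Linearized well-posedness)

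The plan is to prove this by the classical Galerkin method combined with energy estimates, treating the linearized operator $\Fprime{u_*}{\cdot}$ as a perturbation of the ``principal'' part $\Lcal u = \dt u - \A u$. First I would set up a Galerkin scheme using eigenfunctions of the Dirichlet Laplacian (which lie in $(H^2\cap H^1_0)^n$), project the equation $\dt u_m - \A u_m + \B[u_*,u_m] + \B[u_m,u_*] = f$ onto the finite-dimensional spaces $V_m$, and solve the resulting linear ODE system with initial data the projection of $u_0$. Existence and uniqueness on $[0,T]$ for the Galerkin approximations is immediate by linear ODE theory once one checks the coefficient functions are integrable in time, which follows from $u_*\in X$ and the bilinear estimate of Lemma~\ref{NSFquadraticTermEstimateLemma}.

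The heart of the argument is a $T$-independent a priori estimate for $u_m$ in the $X$-norm. I would obtain this in two layers. First, testing the $p$-equation with $p_m$ and the $\velo$-equation with $\velo_m$ and adding, the leading order terms $\etap\|\grad p_m\|^2 + \etav\|\gradvec\velo_m\|^2$ appear with good sign, and the skew-symmetric coupling $(1+\lincofp)\divv\velo_m$ against $p_m$ versus $(1+\lincofv)\grad p_m$ against $\velo_m$ very nearly cancels; the mismatch is controlled because $|\lincofp - \lincofv|$ is small ($\lincofp,\lincofv\in B^{L^\infty}_{\tilde r}$), with the remainder absorbed into the dissipation via Young and Poincaré. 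The bilinear terms $\B[u_*,u_m]+\B[u_m,u_*]$ are estimated using Hölder together with the embeddings $\Hs{1}\embedded\Lp{4}$, $\Hs{2}\embedded\Lp{\infty}$ exactly as in Lemma~\ref{NSFquadraticTermEstimateLemma}, producing factors like $|\eps|\|u_*\|_X$ times $\|u_m\|_{\tilde X}$ times $\|u_m\|_{X_1}$-type quantities; choosing $r$ small makes these absorbable. This yields control of $\sup_t\|u_m(t)\|^2_{\Lp[n]{2}}$ and $\|u_m\|^2_{X_2}$ in terms of $\|f\|^2$ and $\|u_0\|^2_{\Lp[n]{2}}$. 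Second, to reach the full $X$-norm I would test with $-\A u_m$ (equivalently, exploit $\etap\lapl p_m$ against $\lapl p_m$ and $\etav\laplvec\velo_m$ against $\laplvec\velo_m$), which upgrades the estimate to control $\sup_t\|\gradvec u_m(t)\|^2$ (giving $X_3$) and $\|u_m\|^2_{X_2}$; reading the equation then gives $\dt u_m$ in $L^2(T;(L^2)^n)$, i.e.\ $X_1$, and the nonstandard $H^1(T;Z)$ norm \eqref{normH1T} absorbs the $\|u_m(0)\|$ term via $\|u_0\|_{(H^1_0)^n}$. Summing these gives $\|u_m\|_X\leq C_G\|(f,u_0)\|_Y$ with $C_G$ independent of $m$ and $T$.

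With the uniform bound in hand, I would pass to the limit in the usual way: extract a subsequence with $u_m \toweakly u$ in $X$ (using weak/weak-$*$ compactness in each $X_i$), and use linearity of all terms — including the bilinear ones, which are linear in the \emph{unknown} $u_m$ with fixed coefficient $u_*$ — to identify the limit as a solution of \eqref{WeakSolution}; the initial condition passes to the limit since $X_1\embedded C(T;(L^2)^n)$. The a priori estimate \eqref{apriori} is inherited by weak lower semicontinuity of the norm. Uniqueness follows by applying the same energy estimate to the difference of two solutions with $f=0$, $u_0=0$, forcing $u\equiv 0$ once $r$ is small enough.

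The main obstacle I anticipate is the bookkeeping in the a priori estimate: the coupling terms and the two rounds of testing must be combined so that \emph{all} the dissipation is used exactly once and no $\sup_t$ term is left uncontrolled, and the smallness thresholds $r,\tilde r$ must be fixed \emph{before} the constant $C_G$ so as to keep everything $T$-independent. In particular, one must be careful that the terms involving $\lincofp,\lincofv$ and those involving $u_*$ are both genuinely small perturbations of the dissipative part — this is where the hypotheses $u_*\in B^X_r$ and $\lincofp,\lincofv\in B^{L^\infty}_{\tilde r}$ are used in an essential way — rather than merely bounded. A secondary technical point is justifying the test function $-\A u_m$ at the Galerkin level, which is legitimate because $V_m$ is spanned by Laplacian eigenfunctions so $\A$ maps $V_m$ into itself.
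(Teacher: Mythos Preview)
Your proposal is correct and follows essentially the same approach as the paper: Galerkin approximation via Dirichlet--Laplacian eigenfunctions, a two-tier energy estimate (testing first with $u_m$, then with $-\lapl u_m$ --- your ``$-\A u_m$'' should read $-\lapl u_m$, and the first tier yields control of $L^2(T;(H^1)^n)$ rather than $X_2$), followed by weak compactness and uniqueness via the same estimate applied to differences. The paper likewise combines the two energy levels with a carefully chosen weight so that the $L^2(T;(H^1)^n)$ dissipation from the lower tier absorbs the uncancelled cross terms $(1+\lincofp)\divv\velo\cdot\lapl p$ and $(1+\lincofv)\grad p\cdot\laplvec\velo$ arising in the higher tier (where no skew-symmetric cancellation is attempted, to avoid boundary terms), and uses smallness of $r,\tilde r$ exactly as you anticipate.
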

%
%
\subsection{Proof of Theorem~\ref{Theorem Linearized wellposed}}\label{sec:proofthmlin}
The proof is split into several steps and uses the classical method of Galerkin approximation, see cf., eg., \cite[Section 7]{Evans:2010}. 
\subsubsection{Galerkin Approximation}
\newcommand{\galcoff}{D}
\newcommand{\galcofff}{d}
\newcommand{\galeig}{\sigma}
\newcommand{\galfun}{u}
\newcommand{\galspa}{U}
\newcommand{\odea}{M}
\newcommand{\odeb}{b}
For the following Galerkin method we make use of the family of eigenfunctions $\{ \galeig^k \}_{k=1}^\infty$ of the Dirichlet Laplacian $- \Delta:H^2 \cap H^1_0 \to L^2$. 
We can normalize $\{ \galeig^k \}_{k=1}^\infty$ such that it is an orthogonal basis of $H^1_0$ and an orthonormal basis of $L^2$. 

For any fixed $m \in \N$ we define $\sigma_m:=(\sigma^1 , \dots, \sigma^m)^\transpose \in (H^2\cap H_0^1)^m$ and define the solution ansatz as
\begin{align} \label{GalerkinAnsatzFunctions}
\begin{split}
\galfun_m(t) := \galcoff_m(t) \galeig_m \in \galspa_m := (span \{ \sigma^1 , \ldots, \sigma^m \} )^n
\end{split}
\end{align}
For a suitable choice of the coefficient matrix $\galcoff : [0,T] \to \R^{n \times m}$ we want to solve the projection of $\Fprime{u_*}{\galfun_m} = f$ onto the finite dimensional subspace $\galspa_m$. 
That is, we are looking for a function $D_m$ such that $\galfun_m$ according to \eqref{GalerkinAnsatzFunctions} solves 
\begin{align} \label{GalerkinPDE}
\begin{split}
	\pro_{\galspa_m}( \dt u_m(t) -\AAA u_m(t)+ \BBB[u_m(t),u_*(t)] + \BBB[u_*(t),u_m(t)] - f(t) ) &=0 \\
	u_m(0) &= 	\pro_{\galspa_m} (u_0) 
\end{split}
\end{align}
for a.e $t \in [0,T]$, where $\pro_{\galspa_m}$ denotes the component wise projection from $L^2$ onto $\galspa_m$.
Note that the boundary conditions are automatically satisfied due to the choice of the ansatz functions.
\begin{lemma}
Let $m \in \N$, $T\in(0,\infty)$, $\gamma, \delta \in L^\infty$, $({f},u_0) \in Y$ and $u_* \in X $. 
Then there exists a unique absolutely continuous function $\galcoff_m:[0,T] \to \R^{n \times m}$ that solves \eqref{GalerkinPDE} with \eqref{GalerkinAnsatzFunctions}.
\end{lemma}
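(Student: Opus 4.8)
The plan is to recognize that, after fixing the Galerkin dimension $m$, equation \eqref{GalerkinPDE} is nothing but a linear system of ordinary differential equations for the coefficient matrix $\galcoff_m(t)$, and to invoke the Carath\'eodory existence and uniqueness theorem for linear ODEs with $L^1$ (indeed $L^2$) coefficients. First I would expand $u_m(t)=\galcoff_m(t)\galeig_m$ in the basis $\{\sigma^k\}_{k=1}^m$ and test \eqref{GalerkinPDE} against each $\sigma^j$ (component-wise in $\R^n$), using orthonormality of $\{\sigma^k\}$ in $L^2$ to see that the projection of $\dt u_m$ produces exactly $\dot{\galcoff}_m(t)$. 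The term $\pro_{\galspa_m}\AAA u_m$ contributes a constant matrix acting on $\galcoff_m(t)$: the diagonal Laplacian pieces give $-\etap\galeig^k$, $-\etav\galeig^k$ (recall $-\Delta\sigma^k=\galeig^k\sigma^k$ for the eigenvalue $\galeig^k$), while the off-diagonal $\divv$ and $\grad$ couplings give the fixed matrix entries $\langle\divv\sigma^k,\sigma^j\rangle$, $\langle\grad\sigma^k,\sigma^j\rangle$.

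Next I would treat the bilinear terms $\BBB[u_m,u_*]+\BBB[u_*,u_m]$: these are linear in $\galcoff_m(t)$ with coefficients that are time-dependent, being integrals over $\Omega$ of products of the fixed basis functions $\sigma^k$, $\sigma^j$ (and their first derivatives) against the given functions $p_*(t)$, $\velo_*(t)$. Since $u_*\in X\subset X_3=\Lpt[T]{\infty}{\Hs[n]{1}}$ and, crucially, $u_*\in X_2=\Lpt[T]{2}{(\Hs{2}\cap H^1_0)^n}$, these coefficient functions of $t$ lie in $L^2(0,T)$ — one applies H\"older exactly as in Lemma~\ref{NSFquadraticTermEstimateLemma} to bound, e.g., $\bigl|\int_\Omega \eps_1\, \sigma^k\,(\divv\sigma^l)\,p_*(t)\,dx\bigr|$ or $\bigl|\int_\Omega \eps_2\,(\grad\sigma^k\cdot\sigma^l)\,\velo_*(t)\,dx\bigr|$ by a constant (depending on $m$, $\Omega$, $\eps$) times $\|u_*(t)\|_{\Hs{2}}$ (or $\|u_*(t)\|_{\Hs{1}}$), which is in $L^2(0,T)$. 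Likewise the forcing $f(t)$ projects onto a vector in $L^2(0,T;\R^{n\times m})$. Hence \eqref{GalerkinPDE} is equivalent to a linear ODE system
\begin{equation}
\dot{\galcoff}_m(t) = \odea(t)\,\galcoff_m(t) + \odeb(t), \qquad \galcoff_m(0)=\galcoff_m^0,
\end{equation}
where $\odea\in L^2(0,T;\R^{\cdot})$ and $\odeb\in L^2(0,T;\R^{\cdot})$, and $\galcoff_m^0$ is the coordinate representation of $\pro_{\galspa_m}u_0$.

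Finally I would apply the Carath\'eodory theorem: a linear ODE with $L^1_{\mathrm{loc}}$ (here $L^2(0,T)\subset L^1(0,T)$) coefficient matrix and inhomogeneity possesses a unique absolutely continuous solution on all of $[0,T]$, given by the variation-of-constants formula with the fundamental matrix solution; no smallness or local-in-time restriction is needed because the system is linear (global existence follows from the Gr\"onwall bound $\|\galcoff_m(t)\|\le(\|\galcoff_m^0\|+\int_0^t\|\odeb\|)\exp\int_0^t\|\odea\|$). This yields the claimed unique absolutely continuous $\galcoff_m$. The only genuinely non-trivial point — and the one I would be careful to spell out — is the measurability and $L^2$-in-time integrability of the coefficients $\odea(t),\odeb(t)$ coming from the $u_*$-dependent bilinear terms and from $f$; everything else is bookkeeping with the eigenbasis. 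Note that at this stage the constants may blow up as $m\to\infty$, which is fine: the $m$-uniform a priori estimates needed for the passage to the limit are the subject of the subsequent steps of the proof of Theorem~\ref{Theorem Linearized wellposed}, not of this lemma.
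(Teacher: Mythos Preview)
Your proposal is correct and follows essentially the same route as the paper: both reduce \eqref{GalerkinPDE} to a linear Carath\'eodory ODE system for the coefficient vector, verify that the coefficient matrix and inhomogeneity lie in $L^1(0,T)$ (you observe the slightly stronger $L^2(0,T)$), and invoke the standard existence--uniqueness theorem for such systems. The paper cites \cite[Theorem~1.45]{Roubicek:2013} and checks the Carath\'eodory growth condition explicitly, whereas you phrase global existence via Gr\"onwall and variation of constants; these are equivalent formulations of the same argument.
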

\begin{proof}
For the sake of notation, in the following we consider the vectorized version $\galcofff_m$ of $\galcoff_m$ defined as
\begin{align}
\galcofff_m := ( \galcoff_m^{1,1} , \dots , \galcoff_m^{1,m}  ,\dots,\galcoff_m^{n,1}, \dots \galcoff_m^{n,m} )^\transpose = (d^1, \dots, d^{nm})^\transpose \in \R^{nm}.
\end{align} 
Testing the $l$-th equation of \eqref{GalerkinPDE} with $\galeig^j$ for each $l \in \{1, \dots, n\}$ and $j \in \{ 1, \dots, m\}$ gives
the following initial value problem for an $(n m)$-dimensional nonhomogeneous linear ODE with time dependent coefficients
\begin{align} \label{GalerkinODE}
\begin{split}
	\dot{\galcofff}_m(t) &=  \odea_m(t) \galcofff_m(t) + \odeb_m(t) \\
	\galcofff_m(0)  &= \int_\Omega ( (u_0)_1 \galeig^1, \dots, (u_0)_1 \galeig^m, \dots, (u_0)_n \galeig^1, \dots, (u_0)_n \galeig^m)^\transpose
\end{split}
\end{align}
with
\begin{align}
\begin{split}
	\odea^{i,j}_m(t) &:= \sum_{l=1}^n \sum_{k_l=(l-1)m+1}^{lm} \!\!\!\!\!\!\!\!\delta_{k_l j} \int_\Omega  
(\AAA-\BBB[u_*(t),\cdot]-\BBB[\cdot,u_*(t)])_{\chi(i)}( \galeig^{k_1 \bmod m}\!\!\!\!\!\!\!\!, \ldots, \galeig^{k_n\bmod m})   
	\galeig^{i \bmod m}   \\
 	\odeb_m^{i}(t) &:= \int_{\Omega} {{f}(t)_{\chi(i)} } \galeig^{i \bmod m},
 	\end{split} 
\end{align}
where $i \in \{ 1, ..., nm\}$ and $\chi(i)= \ceil*{\frac{i}{m}}$ with $\ceil*{\cdot}$ denoting the ceil function that rounds up to the next integer. Here we use entry wise integration if the integral sign is in front of a vector. 

Indeed, using the linearity of $\AAA-\BBB[u_*(t),\cdot]-\BBB[\cdot,u_*(t)]$ in each component we get 
\[
\begin{aligned}
&(\odea_m(t) d_m(t))_i = \sum_{j=1}^{nm} \odea^{i,j}_m(t) d^j_m(t)  \\
&=   \sum_{l=1}^n \sum_{k_l=(l-1)m+1}^{lm} \int_\Omega  (\AAA-\BBB[u_*(t),\cdot]-\BBB[\cdot,u_*(t)])_{\chi(i)}( \galeig^{k_1 \bmod m}\!\!\!\!\!\!\!\!, \ldots, \galeig^{k_n\bmod m}) \galeig^{i \bmod m} d^{k_l}_m   \\
&=   \sum_{l=1}^n \sum_{k_l=(l-1)m+1}^{lm} \int_\Omega  (\AAA-\BBB[u_*(t),\cdot]-\BBB[\cdot,u_*(t)])_{\chi(i)}( d_m^{k_1} \galeig^{k_1 \bmod m}\!\!\!\!\!\!\!\!, \ldots, d_m^{k_n}\galeig^{k_n\bmod m}) \galeig^{i \bmod m} \\
&=   \int_\Omega  (\AAA-\BBB[u_*(t),\cdot]-\BBB[\cdot,u_*(t)])_{\chi(i)}( \sum_{k_1=1}^{m} d_m^{k_1} \galeig^{k_1 \bmod m}\!\!\!\!\!\!\!\!, \ldots, \!\!\!\!\!\!\!\!\sum_{k_n=(n-1)m+1}^{nm}d_m^{k_n}\galeig^{k_n\bmod m}) \galeig^{i \bmod m} \\
&=   \int_\Omega  (\AAA-\BBB[u_*(t),\cdot]-\BBB[\cdot,u_*(t)])_{\chi(i)}( \galfun_m) \galeig^{i \bmod m},
\end{aligned}
\] 
which corresponds to testing the vectorized version of \eqref{GalerkinPDE}.
 
For the ODE in \eqref{GalerkinODE} we have the following properties. \\
i) System \eqref{GalerkinODE} is linear in $\galcofff_m$, thus clearly also Lipschitz continuous.  \\ 
ii) $\odeb_m$ can be estimated using H\"older's inequality and $\| \galeig^{i \bmod m} \|_{\Lp{2}} = 1$ 
\begin{align}
 \int_0^T | \odeb^{i}_m(t)| dt   
\leq  \int_0^T \| {f}_{\chi(i)}(t) \|_{\Lp{2}} \| \galeig^{i \bmod m} \|_{\Lp{2}} dt \leq  \| {f} \|_{L^1(T;(L^2)^n)} \leq \sqrt{T} \| {f} \|_{L^2(T,(L^2)^n)},
\end{align}
so that $\odeb_m \in L^1([0,T])^{n m}$, where we use $T<\infty$. 
Similarly, for $\odea_m$ and $\fat{k}=(k_1,\ldots,k_n)\in \N^n$, $\galeig^{\fat{k}}=(\galeig^{k_1},\ldots,\galeig^{k_n})$
we have with H\"older's and triangle inequality
\[
\begin{aligned}
&\int_0^T | \odea_m^{i,j}(t)| dt  
  \leq \sum_{ |\fat{k}|_\infty  \leq n}  \int_0^T \| \AAA_{\chi(i)}(\galeig^{\fat{k}}) \|_{\Lp{2}} + \| \BBB_{\chi(i)}[\galeig^{\fat{k}}, u_*(t)]\|_{\Lp{2}} +  \| \BBB_{\chi(i)}[u_*(t),\galeig^{\fat{k}}]\|_{\Lp{2}} dt \\
 & \leq  \sum_{ |\fat{k}|_\infty  \leq n} C_{\AAA}   T \| \galeig^{\fat{k}} \|_{\Hs[n]{2}} + 2 \max\{\eps_1,\ldots,\eps_4\} C_R^2 \int_0^T \| u_*(t)\|_{\Hs[n]{2}} \| \galeig^{\fat{k}} \|_{\Hs[n]{2}} dt \\
&\leq C(\galspa_m,\etap,\etav, \gamma, \delta, \eps,T) (1+\| u_* \|_X),
\end{aligned}
\]
where we have estimated the $\BBB$ terms like in the proof of Lemma \ref{NSFquadraticTermEstimateLemma} and write $ |\fat{k}|_\infty = \max_{ i \in \{1, \dots, n\}} k_i$. Here 
$C_\AAA := 1 + \max\{ \etap,\etav, \| \lincofp \|_{L^\infty}, \| \lincofv \|_{L^\infty} \}$.
This allows us to conclude that $\odea_{m}(t)$ and $\odeb_m(t)$ are in $L^1([0,T])^{nm \times nm}$ and $L^1([0,T])^{nm}$ respectively, so that $V_m:\,(t,d)\mapsto\odea_m(t) d + \odeb(t)$ defines a Carathr\'{e}odory function. \\
iii) Moreover, we have the following growth condition for $V_m$
\begin{align}
	| \odea_m(t) d + \odeb_m(t)| \leq C |\odea_m(t)| |d| + |\odeb(t)| ,
\end{align}
where we have $t\mapsto C|\odea_m(t)| \in L^1([0,T])$ for some $C>0$.
By \cite[Theorem 1.45]{Roubicek:2013} we obtain existence of a unique absolutely continuous solution $\galcoff_m \in W^{1,1}([0,T])^{n \times m}$.
\end{proof}
\subsubsection{Energy estimates}
In the following we derive energy estimates for the Galerkin approximation \eqref{GalerkinPDE} for fixed $m \in \N$.
We make use of the following integration by parts formulas, assuming that $u,v \in H^2(\Omega)$ and $w \in H^1(\Omega)$, $U \in H^1(\Omega)^d$. 
\begin{align} \label{partial integration formulas}
	\begin{split}
		\int_{\Omega} w (\divv U)  &= - \int_{\Omega} \grad w \cdot U + \int_{\partial \Omega} w U \cdot \fat{n} dS \\
		\int_{\Omega} u \lapl v &= - \int_{\Omega} \grad u \cdot \grad v + \int_{\partial \Omega} u \grad v \cdot \fat{n} dS,
	\end{split}
\end{align}
where $\fat{n} \in L^1(\partial \Omega)^d$ is the outer unit normal vector of $\Omega$.
Note that for every $m \in \N$ we have that $(p_m(t),\fat{v_m}(t))^\transpose \in (H^2 \cap H_0^1)^n$ for a.e. $t$, which allows us to use the above formulas in the following. 

We can prove the following energy estimate.
\begin{lemma}\label{lem:enest}
There exist $r, \tilde{r}>0$ and $C_G>0$ independent of $T$ and $m$ such that for $u_* \in B_{r}^{\tilde{X}}$ and any $\lincofp, \lincofv \in B_{\tilde{r}}^{L^\infty}$, $(f,u_0) \in Y$ the Galerkin approximation $u_m$ solving \eqref{GalerkinPDE} satisfies the a priori estimate
\begin{align}\label{enest_Galerkin}
	\| u_m \|_{X} \leq C_G  \| (f,u_0) \|_Y .
\end{align}
\end{lemma}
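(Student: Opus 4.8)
The plan is to derive the a priori bound \eqref{enest_Galerkin} by testing the Galerkin equations \eqref{GalerkinPDE} with suitable combinations of $u_m$ and its derivatives, mimicking the energy structure of the linear operator $\AAA$ and absorbing the bilinear perturbation $\BBB[u_m,u_*]+\BBB[u_*,u_m]$ by smallness of $u_*$. Since $X=X_1\cap X_2\cap X_3$ with the three norms controlling $\dt u_m$ in $L^2(T;(L^2)^n)$ together with $u_m(0)$, $u_m$ in $L^2(T;(H^2)^n)$, and $u_m$ in $L^\infty(T;(H^1)^n)$, I would establish three corresponding estimates and then combine them.

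First, \textbf{the basic $H^1$-in-space / $L^\infty$-in-time estimate.} Test the first component of \eqref{GalerkinPDE} with $-\lapl p_m$ and the second with $-\laplvec \velo_m$ (both lie in $U_m$ by the eigenfunction choice), integrate over $\Omega$, and integrate by parts. The diagonal viscous terms $\etap\lapl p_m$, $\etav\laplvec\velo_m$ produce $+\etap\|\lapl p_m\|_{L^2}^2$, $+\etav\|\laplvec\velo_m\|_{L^2}^2$; the off-diagonal coupling terms, being skew-adjoint up to the $H^1$-inner product, should largely cancel (this is where the $\grad$/$\divv$ pairing is used), leaving at most terms controllable by $\lincofp,\lincofv$ small. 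The time-derivative terms assemble into $\frac12\frac{d}{dt}\|\gradvec u_m\|_{L^2}^2$. The forcing contributes $\int f\cdot(-\lapl u_m)$, handled by Young's inequality against the $\|\lapl u_m\|_{L^2}^2$ terms. The bilinear terms $\int(\BBB[u_m,u_*]+\BBB[u_*,u_m])\cdot(-\lapl u_m)$ are estimated exactly as in Lemma~\ref{NSFquadraticTermEstimateLemma}, giving a bound $\lesssim|\eps|\|u_*\|_{\tilde X}\|u_m\|_{\tilde X}\|\lapl u_m\|_{L^2}$, again absorbed into the viscous dissipation provided $\|u_*\|\leq r$ and $\tilde r$ are small enough. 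Integrating in time and using \eqref{normH1T}-type manipulations with $u_m(0)=\pro_{U_m}u_0$ (so $\|\gradvec u_m(0)\|\leq\|u_0\|_{(H_0^1)^n}$) yields simultaneously $\sup_{[0,T]}\|\gradvec u_m\|_{L^2}^2$ (the $X_3$ norm, via Poincaré) and $\int_0^T\|\lapl u_m\|_{L^2}^2\,dt$ (the $X_2$ norm, via elliptic regularity $C_\lapl$) bounded by $C\|(f,u_0)\|_Y^2$.

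Second, \textbf{the $\dt u_m$ estimate ($X_1$ norm).} Having $u_m\in L^2(T;(H^2)^n)$ bounded, rewrite \eqref{GalerkinPDE} as $\pro_{U_m}\dt u_m=\pro_{U_m}(\AAA u_m-\BBB[u_m,u_*]-\BBB[u_*,u_m]+f)$; since $\dt u_m\in U_m$, taking $L^2(T;(L^2)^n)$ norms and using the boundedness of $\pro_{U_m}$ on $L^2$ gives $\|\dt u_m\|_{L^2(T;(L^2)^n)}\lesssim\|\AAA u_m\|_{L^2(T;(L^2)^n)}+\|\BBB[\cdot]\|_{\tilde Y}+\|f\|_{L^2(T;(L^2)^n)}$, all already controlled by the first step and Lemma~\ref{NSFquadraticTermEstimateLemma}. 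Combined with $\|u_m(0)\|_{(L^2)^n}\leq\|u_0\|_{(H_0^1)^n}$ this bounds $\|u_m\|_{X_1}$. Summing the three contributions gives \eqref{enest_Galerkin} with $C_G$ depending only on $\etap,\etav,|\eps|,C_\B$, the embedding constants and $r,\tilde r$ — crucially not on $T$ or $m$, because every step used $T$-independent inequalities and the special norm \eqref{normH1T}.

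\textbf{Main obstacle.} The delicate point is the cancellation/absorption in the first estimate: one must verify that the off-diagonal terms $-(1+\lincofp)\int\divv\velo_m\,(-\lapl p_m)$ and $-(1+\lincofv)\int\grad p_m\cdot(-\laplvec\velo_m)$ combine (after integration by parts, using $u_m\in(H^2\cap H_0^1)^n$) so that the order-one parts annihilate and only $O(\lincofp,\lincofv)$ remainders survive, and that these together with the bilinear remainders are strictly dominated by $\min\{\etap,\etav\}(\|\lapl p_m\|_{L^2}^2+\|\laplvec\velo_m\|_{L^2}^2)$. This forces the explicit smallness thresholds $\tilde r$ on $\lincofp,\lincofv$ and $r$ on $\|u_*\|$ quoted in the statement; getting a clean Grönwall-free bound (the viscous coercivity should make Grönwall unnecessary, or at worst yield a $T$-independent exponential that \eqref{normH1T} neutralizes) is the part requiring care.
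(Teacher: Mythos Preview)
Your overall architecture (higher-order test for the $X_2,X_3$ norms, then read off $\dt u_m$ from the equation for $X_1$) matches the paper, but there is a genuine gap in the first step.

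The claimed cancellation of the order-one off-diagonal terms when testing with $-\lapl u_m$ does not go through. After integrating by parts, the expression
\[
\int_\Omega \divv\velo_m\,\lapl p_m\,dx + \int_\Omega \grad p_m\cdot\laplvec\velo_m\,dx
\]
leaves a boundary contribution of the form $\int_{\partial\Omega}(\partial_n p_m)(\partial_n\velo_m\cdot\fat{n})\,dS$; Dirichlet data on $p_m,\velo_m$ kill only their tangential derivatives, not the normal ones, so this term is uncontrolled. The paper says exactly this: at the $-\lapl u_m$ level ``we make no attempt to use cancellations of the skew symmetric terms from $\AAA$ since this would lead to additional boundary terms.'' If instead you try to estimate these terms crudely by Young's inequality alone, the resulting $\|\gradvec u_m\|_{L^2}^2$ contribution carries an $O(1/\min\{\etap,\etav\})$ coefficient and cannot be absorbed into the $O(\min\{\etap,\etav\})$ viscous dissipation; a Gr\"onwall argument would then introduce $T$-dependence, which the lemma forbids.

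The missing ingredient is a \emph{second, lower-order} energy identity: test \eqref{GalerkinPDE} with $u_m$ itself. At this level the skew cancellation $\int_\Omega(p_m\divv\velo_m+\grad p_m\cdot\velo_m)\,dx=0$ is legitimate (it only needs $p_m\vert_{\partial\Omega}=0$), and the $\lincofp,\lincofv$ remainders are absorbed by choosing $\tilde r$ small. This yields a $T$-independent bound on $\int_0^T\|u_m\|_{(H^1)^n}^2\,dt$. Then, in the $-\lapl u_m$ test, estimate the off-diagonal terms directly by Cauchy--Schwarz and Young as
\[
C_1\|u_m\|_{L^2(s;(H^1)^n)}^2 + \tfrac{\min\{\etap,\etav\}}{2C_\lapl^2}\|u_m\|_{L^2(s;(H^2)^n)}^2,
\]
absorb the second summand into the viscosity, and control the first by the lower-order estimate just obtained. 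The paper combines the two identities with a fixed weight to produce a single coercive energy; your $X_1$ step then works as you wrote it.
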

Note that we need smallness of $u_*$ only in the weaker norm of $\tilde{X}$.
\begin{proof}
1. A lower order energy identity can be obtained by testing system \eqref{GalerkinPDE} with $u_m(t)=(p_m(t),v_m(t))^\transpose \in \galspa_m$. After skipping the subscript $m$ for better readability, this yields using integration by parts
\begin{equation}\label{enid0}
\begin{aligned}
0=&	\langle \dt u_m(t) -\AAA u_m(t)+ \BBB[u_m(t),u_*(t)] + \BBB[u_*(t),u_m(t)] - f(t),u_m(t)\rangle_{(L^2)^n}\\ 
& =\frac12 \frac{d}{dt} \|u(t)\|_{(L^2)^n}^2+\etap\|\grad p(t)\|_{(L^2)^d}^2+\etav \|\gradvec \fat{v}(t)\|_{(L^2)^{d \times d}}^2  \\
&\qquad
+ \langle (\lincofp \divv \fat{v}(t) , \lincofv \grad p(t) )^\transpose + \BBB[u(t),u_*(t)] + \BBB[u_*(t),u(t)] - f(t),u(t)\rangle_{(L^2)^n}\\
&\geq \frac12 \frac{d}{dt} \|u(t)\|_{\Lp[n]{2}}^2 + \frac{\etap - c}{C_P^2} \|p(t)\|_{\Hs{1}}^2+  \frac{\etav - c}{C_P^2} - \|\fat{v}(t)\|_{\Hs[n]{1}}^2\\
&\qquad- \bigl(\|\BBB[u(t),u_*(t)]\|_{\Lp[n]{2}} + \|\BBB[u_*(t),u(t)]\|_{\Lp[n]{2}} + \|f(t)\|_{\Lp[n]{2}}\bigr)\|u(t)\|_{\Lp[n]{2}}.
\end{aligned}
\end{equation}
Here we have used the Cauchy-Schwarz inequality in space and cancellation within the skew symmetric $\divv$ and $\grad$ terms of $\AAA$
\[
\int_\Omega \Bigl( p(t) \divv \fat{v}(t)  \,  + \grad p(t) \cdot \fat{v}(t)\Bigr)\, dx =0
\]
and
\[
C_P^2 \int_\Omega \Bigl| \lincofp \divv \fat{v}(t)  \, p(t) + \lincofv \grad p(t) \cdot \fat{v}(t)\Bigr|\ dx 
\leq c ( \| p(t) \|^2_{H^1} + \| \velo(t) \|^2_{(H^1)^n} )
\]
with $c := \frac{C_P^2}{2} ( \| \lincofp \|_{L^\infty} + d \| \lincofv \|_{L^\infty})$.
Integrating from $0$ to $s$ for $s \in [0,T]$ and using Poincar\'{e}'s inequality, the Cauchy-Schwarz inequality in time, as well as Lemma~\ref{NSFquadraticTermEstimateLemma} gives
\begin{equation}\label{enest0}
\begin{aligned}
	&\frac{1}{2} \Bigl( \| u(s) \|^2_{\Lp[n]{2}} - \| u_0 \|^2_{\Lp[n]{2} } \Bigr) + 	
\frac{\min\{\etap ,\etav  \} - c}{C_P^2} \int_0^s \| u\|_{\Hs[n]{1}}^2   
	\\
	&\leq \Bigl(2 C_\B | \eps| \|u\|_{\tilde{X}} \|u_*\|_{\tilde{X}} + \|f\|_{L^2(T;\Lp[n]{2})}\Bigr)
\|u\|_{L^2(T;\Lp[n]{2})},
\end{aligned}
\end{equation}
where we choose \begin{equation} \label{gammdeltasmallness}
\tilde{r} := \frac{ \min \{ \mu, \eta \}}{C_P^2 d}, \end{equation} so that $c< \min \{ \mu, \eta \}$. \\
2. Similarly, a higher order energy identity results from testing \eqref{GalerkinPDE} with $-\lapl u_m(t)$  $=(-\lapl p_m(t),-\laplvec \velo_m(t))^\transpose \in \galspa_m$ as follows. Again skipping the subscripts after the first line and applying integration by parts gives 
\begin{equation}\label{enid1}
\begin{aligned}
0=&	\langle \dt u_m(t) -\AAA u_m(t)+ \BBB[u_m(t),u_*(t)] + \BBB[u_*(t),u_m(t)] - f(t), -\lapl u_m(t)\rangle_{(L^2)^n}\\ 
& =\frac12 \frac{d}{dt} \|\grad u(t)\|_{(L^2)^{n \times d}}^2+\etap\|\lapl p(t)\|_{L^2}^2+\etav \|\laplvec \fat{v}(t)\|_{(L^2)^n}^2\\ 
&\qquad
-\int_\Omega \Bigl((1+\lincofp)\divv \fat{v}(t)  \lapl p(t) + (1+\lincofv) \grad p(t) \cdot \laplvec \fat{v}(t)\Bigr)\, dx\\ 
&\qquad
+ \langle\BBB[u(t),u_*(t)] + \BBB[u_*(t),u(t)] - f(t),-\lapl u(t)\rangle_{(L^2)^n}\\
\end{aligned}
\end{equation}
Differently from \eqref{enid0}, we make no attempt to use cancellations of the skew symmetric terms from $\AAA$ since this would lead to additional boundary terms.
Instead, we estimate them using the Cauchy-Schwarz inequality in space and time in combination with Young's inequality  
\newcommand{\myC}{C_1}
\newcommand{\myCC}{C_2}
\[
\begin{aligned}
&\left|\int_0^s\int_\Omega \Bigl( (1+\lincofp)\divv \fat{v}(t) \, \lapl p(t) + (1+\lincofv) \grad p(t) \cdot \laplvec \fat{v}(t)\Bigr)\, dx\right|\\
&\leq (1 + \| \lincofp \|_{L^\infty}) \|\divv \fat{v}\|_{L^2(s;\Lp{2})} \| \lapl p\|_{L^2(s;\Lp{2})} 
+ (1+ \| \lincofv \|_{L^\infty}) \|\grad p\|_{L^2(s;\Lp[n]{2})} \|\laplvec \fat{v}\|_{L^2(s;\Lp[n]{2})}
\\
&\leq \myC  \| u \|^2_{L^2(s;(H^1)^n)} 
+ \frac{\min\{\etap,\etav\}}{2 C_\lapl^2}\| u \|^2_{L^2(s;(H^2)^n)} 
\end{aligned}
\]
with $\myC := (1+ \frac{\| \lincofp \|_{L^\infty} + \| \lincofp \|_{L^\infty}}{2})^2 \frac{C_\lapl^2}{\min \{ \mu, \eta\}} $.
Inserting this into the time integrated version of \eqref{enid1}, using Lemma~\ref{NSFquadraticTermEstimateLemma} and absorbing the $\| u \|_{L^2(s;(H^2)^n)}$ term gives
\begin{equation}\label{enest1}
\begin{aligned}
&	\frac{1}{2} \Bigl( \| \nabla u(s) \|^2_{\Lp[n\times d]{2}} - \| \nabla u_0 \|^2_{\Lp[n\times d]{2} } \Bigr) + 	
\frac{\min\{\etap,\etav\}}{2 C_\lapl^2} \int_0^s \| u\|_{\Hs[n]{2}}^2   
\\
&	\leq 
\Bigl(2 C_\B | \eps| \|u\|_{\tilde{X}} \|u_*\|_{\tilde{X}} + \|f\|_{L^2(T;\Lp[n]{2})}\Bigr)
\|u\|_{L^2(T;\Hs[n]{2})} + \myC \| u\|_{L^2(s;\Hs[n]{1})}^2 .
\end{aligned}
\end{equation}
The last term can be controlled by the left hand side of  \eqref{enest0}. 
This leads us to combining \eqref{enest0}+ $ \myCC$\eqref{enest1} with $\myCC:=\frac{\min\{\etap ,\etav  \} - c}{ 2 C_P^2 \myC}$ to obtain, for the energy
\begin{equation}\label{energy}
\mathcal{E}[u](s):=  \frac{1}{2} \| u(s) \|^2_{\Lp[n]{2}} 
+ \frac{\myCC}{2} \| \gradvec u(s) \|^2_{\Lp[n\times d]{2}} 
\end{equation}
the estimate
\newcommand{\cyoung}{\sigma}
\begin{equation}\label{enest}
\begin{aligned}
&\mathcal{E}[u](s)
+ \frac{ \min\{\etap ,\etav  \} - c}{ 2 C_P^2} \int_0^s \| u\|_{\Hs[n]{1}}^2   
+ \frac{ \myCC \min\{\etap,\etav\}}{  2 C_\lapl^2} \int_0^s \| u\|_{\Hs[n]{2}}^2   
	\\
&	\leq 
\Bigl(2 C_\B |\eps| \|u\|_{\tilde{X}} \|u_*\|_{\tilde{X}} + \|f\|_{L^2(T;\Lp[n]{2})}\Bigr) 
	\Bigl(\myCC \|u\|_{L^2(T;\Hs[n]{2})}
+\|u\|_{L^2(T;\Lp[n]{2})}\Bigr)
+\mathcal{E}[u](0)
	\\
&	\leq 
\Bigl(2 C_\B | \eps | \|u\|_{\tilde{X}} \|u_*\|_{\tilde{X}} + \|f\|_{L^2(T;\Lp[n]{2})}\Bigr)
\Bigl( \myCC +1\Bigr)\|u\|_{\tilde{X}}
+\mathcal{E}[u](0)
	\\
&	\leq 
\Bigl(\myCC +1\Bigr)
\Bigl\{\Bigl(2 C_\B |\eps| \|u_*\|_{\tilde{X}}+\frac{\cyoung}{2}\Bigr) \|u\|_{\tilde{X}}^2
+ \frac{1}{2\cyoung}\|f\|_{L^2(T;\Lp[n]{2})}^2\Bigr\}
+\mathcal{E}[u](0)
,
\end{aligned}
\end{equation}
where we have used Young's inequality to obtain the last inequality with $\cyoung>0$ yet to be chosen.
The left hand side of \eqref{enest}, after taking the supremum over $s$, due to  
\[
\sup_{s\in[0,T]} \bigl( \sum_{i=1}^4 a_i(s)\bigr)
\geq \frac{1}{4} \sum_{i=1}^4 \bigl(\sup_{s\in[0,T]} a_i (s)\bigr)
\]
for $a_i \geq 0$ is an upper bound for the full $\tilde{X}$ norm of $u$, i.e. we have
\[
\sup_{s\in[0,T]}\Bigl\{\mathcal{E}[u](s)
+ \frac{ \min\{\etap ,\etav  \} - c}{ 2 C_P^2} \int_0^s \| u\|_{\Hs[n]{1}}^2   
+ \frac{ \myCC \min\{\etap,\etav\}}{  2 C_\lapl^2} \int_0^s \| u\|_{\Hs[n]{2}}^2     \Bigr\}
\geq \underline{c} \|u\|_{\tilde{X}}
\]
cf, \eqref{energy}, with $\underline{c}= \frac{1}{4} \min\{
\frac{1}{2}, \frac{C_2}{2}, 
\frac{ \min\{\etap ,\etav  \} - c}{ 2 C_P^2}, 
\frac{ \myCC \min\{\etap,\etav\}}{  2 C_\lapl^2}
\}$.
Assuming $u_*$ to be small and choosing $\cyoung>0$ small enough  
\begin{equation}\label{smallness_ustar} 
 \|u_*\|_{\tilde{X}}\leq r:=\frac{\underline{c}}{8 C_\B |\eps|} \Bigl(\myCC +1\Bigr)^{-1}, \quad 
\cyoung := \frac{\underline{c}}{2} \Bigl(\myCC +1\Bigr)^{-1}
\end{equation}
we obtain (recalling that $u=u_m$ is the Galerkin approximation here)
\begin{align}\label{enest_Galerkin_tilde}
	\| u_m \|_{\tilde{X}} \leq \tilde{C}_G  \| (f,u_0) \|_Y 
\end{align}
with 
\[
\tilde{C}_G^2= \frac{2}{\underline{c}}
\max\Bigl\{ \frac{1}{2\cyoung}\Bigl(\myCC +1\Bigr), \, 
\frac{\myCC}{2}, \, \frac12 \Bigr\}.
\]
Here we have estimated 
$$\mathcal{E}[u](0)= \frac{1}{2} \| u_0 \|^2_{\Lp[n]{2}} 
+ \frac{\myCC}{2} \| \gradvec u_0 \|^2_{\Lp[n\times d]{2}} 
\leq \max\Bigl\{ \frac12 , \frac{\myCC}{2} \Bigr\}\|u_0\|^2_{(H^1)^n}.$$
3. By testing \eqref{GalerkinPDE} with $\dt u_m\in\galspa_m$ and integrating with respect to time, we can also estimate 
\begin{align}
	\begin{split}
\| \dt u_m \|_{L^2(T;(L^2)^n)} \leq \| \A u_m \|_{L^2(T;(L^2)^n)} &+ \| \B[u_m, u_*] \|_{L^2(T;(L^2)^n)}  \\ 
&+ \| \B[u_*,u_m] \|_{L^2(T;(L^2)^n)} + \| f \|_{L^2(T;(L^2)^n)}.
	\end{split}	
\end{align}
Applying the triangle inequality it is easy to see  that 
$$\| \A \|_{\linearspace(\tilde{X},L^2(T;(L^2)^n)} \leq \tilde{\eta} + 1  +  \max\{\| \lincofp \|_{L^\infty} , \| \lincofv \|_{L^\infty} \}$$ with $\tilde{\eta}=\max\{\etap,\etav \}$.
Using once more Lemma~\ref{NSFquadraticTermEstimateLemma} we altogether get
\begin{align}\label{estdtu}
\| \dt u_m \|_{L^2(T;(L^2)^n)} \leq \Bigl( \tilde{\eta} + 1  + \max\{\| \lincofp \|_{L^\infty} , \| \lincofv \|_{L^\infty} \} + 2 C_\B |\eps|\, \| u_* \|_{\tilde{X}}\Bigr)
\| u_m \|_{\tilde{X}}. 
\end{align}
Combining this with \eqref{enest_Galerkin_tilde} allows us to recover an estimate on the whole $X$ norm of $u_m$ and obtain \eqref{enest_Galerkin} with 
$$C_G=( \tilde{\eta} + 1  + \max\{\| \lincofp \|_{L^\infty} , \| \lincofv \|_{L^\infty} \} + 2 C_\B |\eps|\, \| u_* \|_{\tilde{X}})\tilde{C}_G+1, $$ provided $u_*\in $ $B^{\tilde{X}}_{r}, \gamma, \delta \in B^{L^\infty}_{\tilde{r}}$ with $r, \tilde{r}$ as in \eqref{smallness_ustar}, \eqref{gammdeltasmallness}.
Here we are using the norm defined by \eqref{normH1T}. 
Alternatively, we could combine \eqref{estdtu} with $\| u_m \|_{L^2(T;(L^2)^n)}\leq \| u_m \|_{\tilde{X}}$ to obtain an estimate on the standard ${H^1(T;(L^2)^n)}$ norm of $u_m$.
\end{proof}
%
%
%
%
\subsubsection{Existence and uniqueness}
In the last step of the proof we use weak convergence to establish convergence to the solution of the PDE with initial and boundary conditions. 
Due to the energy estimates we know that $u_m$ is contained in the ball with radius $R:=C_G \|(f,u_0)\|_{Y}$  in $X$, which is the dual of a separable Banach space. Thus, using the Banach-Alaoglu Theorem we get existence of a weak* accumulation point $u\in B_R^X$ of the sequence $u_m$. For simplicity of notation we denote the subsequence converging weakly* to $u$ again by $u_m$, that is  
$u_m\toweaklystar u$ in $X$, which implies
\begin{align} \label{WeakConvergenceDerivitaves}
	\begin{split}
		u_m \toweakly u, \quad 
		\dt u_m \toweakly \dt u, \quad 
		\partial_{ x_i } u_m \toweakly \partial_{x_i} u, \quad
		\lapl u_m \toweakly \lapl u \text{ in } L^2(T;(L^2)^n)
	\end{split}
\end{align}
for $i \in \{ 1, \ldots, \dime \}$.
Here we have used the fact that in a Hilbert space, weak* and weak convergence are equivalent.
Additionally, we make use of compactness of the embedding $X\embedded C(T;H^s)$, which allows us to add the strong limit
\begin{align} \label{StrongConvergence}
	\begin{split}
		u_m(0) \to u(0) \text{ in } (H^s)^n \text{ for any }s\in(0,1).
	\end{split}
\end{align}
For any fixed $\ell\in\N$ and any $w_\ell\in \galspa_\ell$, $\Phi_\ell\in L^2(T;\galspa_\ell)$, from the Galerkin equation \eqref{GalerkinPDE} we obtain for all $m\geq\ell$
\begin{equation}\label{Galerkinint}
\begin{aligned} 
	&\int_0^T\int_\Omega( \dt u_m(t) -\AAA u_m(t)+ \BBB[u_m(t),u_*(t)] + \BBB[u_*(t),u_m(t)] - f(t) )\Phi_\ell\, dx \, dt=0, \\
	&\int_\Omega (u_m(0) - 	u_0)w_\ell\, dx =0 .
\end{aligned}
\end{equation}
Note that due to Lemma~\ref{NSFquadraticTermEstimateLemma}, the linear operator $\B[\cdot,u_*]\in \linearspace(\tilde{X},L^2(T;(L^2)^n))$ is bounded, thus also weakly continuous; the same clearly holds for $\AAA$. 
Thanks to this and \eqref{WeakConvergenceDerivitaves}, \eqref{StrongConvergence}, we can take (weak*) limits in \eqref{Galerkinint} to obtain
\[
\begin{aligned} 
	&\int_0^T\int_\Omega( \dt u(t) -\AAA u(t)+ \BBB[u(t),u_*(t)] + \BBB[u_*(t),u(t)] - f(t) )\Phi_\ell\, dx \, dt=0 \\
	&\int_\Omega (u(0) - 	u_0)w_\ell\, dx =0. 
\end{aligned}
\]
Since $\ell$ was arbitrary, by density of $\bigcup_{\ell\in\N}L^2(T;\galspa_\ell)$ in $L^2(T,(L^2)^n)$ and of $\bigcup_{\ell\in\N}\galspa_\ell$ in $H^s$, this implies that $u$ satisfies the PDE and, furthermore, the initial conditions in an $L^2(T;(L^2)^n)$ and an $H^s$ sense, respectively.

Uniqueness follows directly from the a priori estimate \eqref{apriori} that by a testing strategy analogous to the one used in the proof of Lemma~\ref{lem:enest} actually holds not only for the limit of Galerkin solutions but for an arbitrary solution to \eqref{WeakSolution}.
Linearity of the problem implies that the difference $u_1-u_2$ between two potential solutions $u_1$, $u_2$ satisfies \eqref{WeakSolution} with $(f,u_0)=(0,0)$. Thus \eqref{apriori} yields that $\|u_1-u_2\|_{X}=0$.

\subsection{Local in time well-posedness by a Newton-Kantorovich type argument}
To prove existence and uniqueness of a solution to the nonlinear problem $F(u)=0$, that is of \eqref{2ndOrderNSF_ibvp} for given sufficiently small data $(f,u_0)=((\forcep,\forcev)^\transpose,(p_0,\velo_0)^\transpose)$ and $\gamma,\delta$, we apply the Newton-Kantorovich Theorem, whose proof in itself is basically a refined version of the Contraction Principle but additionally provides local quadratic convergence of Newton's method applied to the problem at hand. We recall it quoting from \cite{Ortega1968}, in a notation adapted to the one of this paper for the convenience of the reader.
\begin{theorem}\label{NewtonKantorowich}(Newton-Kantorovich Theorem, \cite{Kantorovich1948,KantorovichAkilov}) \\
Let $X$ and $Y$ be Banach spaces and $\F:\mathcal{D}(\F)\subseteq X \to Y$. Suppose that on an open convex set $D_*\subseteq \mathcal{D}(\F)$, $\F$ is Fr\'{e}chet differentiable and $\F'$ is Lipschitz continuous on $D_*$ with Lipschitz constant $K$
\[
\|\F'_{u_1}-\F'_{u_2}\|_{\linearspace(X,Y)}\leq K \|u_1-u_2\|_{X} , \quad u_1, \, u_2\in D_*.
\]
For some $u_*\in D_*$, assume that $\Gamma_*:={\F'_{u_*}}^{-1}$ is defined on all of $Y$ and that 
\begin{equation}\label{betaK}
\beta K \|\Gamma_*\F(u_*)\|_X\leq\frac12, \ \text{ where } \beta = \|\Gamma_*\|_{\linearspace(Y,X)}.
\end{equation}
Set 
\begin{equation}\label{rstart_rstarstar}
r^\pm=\frac{1}{\beta K}(1\pm\sqrt{1-2\beta K\|\Gamma_*\F(u_*)\|_X})
\end{equation}
and suppose that $B_{r^-}(u_*)\subseteq D_*$. 
Then the Newton iterates 
\begin{equation}\label{Newton}
u^{(k+1)}=u^{(k)}-{\F'_{u^{(k)}}}^{-1}\F(u^{(k)}), \, k=0,1,2,\ldots 
\end{equation}
starting at $u^{(0)}:=u_*$ 
are well-defined, lie in $B_{r^-}(u_*)$ and converge to a solution of $\F(u)=0$, which is unique in $B_{r^+}(u_*)\cap D_*$. Moreover, if strict inequality holds in \eqref{betaK}, the order of convergence is at least quadratic.
\end{theorem}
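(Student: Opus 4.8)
The plan is to reproduce the classical majorant proof of the Newton-Kantorovich theorem; the full argument is in \cite{Kantorovich1948,KantorovichAkilov} (see also \cite{Ortega1968}), so I will only lay out its structure. The idea is to dominate the Newton iteration \eqref{Newton} in $X$ by the scalar Newton iteration for the quadratic majorant
\[
\psi(t):=\frac{\beta K}{2}\,t^2-t+\|\Gamma_*\F(u_*)\|_X ,
\]
which satisfies $\psi(0)=\|\Gamma_*\F(u_*)\|_X$, $\psi'(0)=-1$, $\psi''\equiv\beta K$, has the two roots $r^\pm$ from \eqref{rstart_rstarstar}, is negative on $(r^-,r^+)$, and has $\psi'<0$ on $[0,r^-]$. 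First I would record the elementary facts about the scalar Newton sequence $t_0:=0$, $t_{k+1}:=t_k-\psi(t_k)/\psi'(t_k)$: it is nondecreasing, contained in $[0,r^-]$, and $t_k\nearrow r^-$, so the increments $\tau_k:=t_{k+1}-t_k\ge0$ sum to $r^-$; moreover, since $\psi$ is quadratic and $\psi(t_k)+\psi'(t_k)\tau_k=0$, one has $\psi(t_{k+1})=\tfrac{\beta K}{2}\tau_k^2$ and hence $\tau_{k+1}=\tfrac{\beta K}{2}\tau_k^2/(-\psi'(t_{k+1}))$.

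The heart of the proof is an induction on $k$ establishing simultaneously that $u^{(k)}\in B_{r^-}(u_*)\subseteq D_*$, that $\F'_{u^{(k)}}$ is boundedly invertible, and that $\|u^{(k)}-u_*\|_X\le t_k$ and $\|u^{(k+1)}-u^{(k)}\|_X\le\tau_k$. For the invertibility I would use the Banach perturbation lemma: since $\|\Gamma_*(\F'_{u^{(k)}}-\F'_{u_*})\|_{\linearspace(X,X)}\le\beta K\|u^{(k)}-u_*\|_X\le\beta K t_k<1$, the factorization $\F'_{u^{(k)}}=\F'_{u_*}\bigl(I-\Gamma_*(\F'_{u_*}-\F'_{u^{(k)}})\bigr)$ yields $\|(\F'_{u^{(k)}})^{-1}\|_{\linearspace(Y,X)}\le\beta/(1-\beta K t_k)=-\beta/\psi'(t_k)$. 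For the step bound I would use the integral form of Taylor's theorem together with the Lipschitz hypothesis on $\F'$,
\[
\|\F(u^{(k+1)})\|_Y=\Bigl\|\F(u^{(k+1)})-\F(u^{(k)})-\F'_{u^{(k)}}(u^{(k+1)}-u^{(k)})\Bigr\|_Y\le\tfrac{K}{2}\,\|u^{(k+1)}-u^{(k)}\|_X^2 ,
\]
so that $\|u^{(k+2)}-u^{(k+1)}\|_X\le\|(\F'_{u^{(k+1)}})^{-1}\|_{\linearspace(Y,X)}\,\tfrac{K}{2}\|u^{(k+1)}-u^{(k)}\|_X^2$, and the right-hand side, bounded using $\|u^{(k+1)}-u_*\|_X\le t_{k+1}$ and $\|u^{(k+1)}-u^{(k)}\|_X\le\tau_k$, equals exactly $-\tfrac{\beta K}{2}\tau_k^2/\psi'(t_{k+1})=\tau_{k+1}$; adding $\|u^{(k+1)}-u_*\|_X\le t_{k+1}$ then gives $\|u^{(k+2)}-u_*\|_X\le t_{k+2}\le r^-$, closing the induction. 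Telescoping shows $(u^{(k)})$ is Cauchy, so it converges to some $u\in B_{r^-}(u_*)$, and continuity of $\F$ together with $\|\F(u^{(k)})\|_Y\to0$ yields $\F(u)=0$.

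For uniqueness in $B_{r^+}(u_*)\cap D_*$ I would argue that if $\tilde u$ is another zero there, then $0=\F(\tilde u)-\F(u)=\bigl(\int_0^1\F'_{u+\theta(\tilde u-u)}\,d\theta\bigr)(\tilde u-u)$; factoring out $\F'_{u_*}$ and estimating via the Lipschitz bound on $\F'$ and the identity $r^-+r^+=2/(\beta K)$ shows that $\int_0^1\Gamma_*\F'_{u+\theta(\tilde u-u)}\,d\theta$ differs from the identity by an operator of norm strictly less than $1$, hence is invertible, forcing $\tilde u=u$. Finally, if strict inequality holds in \eqref{betaK} then $r^-<r^+$ and $\psi'(r^-)\ne0$, so the scalar Newton sequence converges to $r^-$ quadratically, and the majorization $\|u^{(k+1)}-u^{(k)}\|_X\le\tau_k$ transfers that rate to $(u^{(k)})$.

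The main obstacle is making this induction airtight: bounded invertibility of $\F'_{u^{(k)}}$ with a quantitatively sharp bound on the inverse, containment of all iterates in $B_{r^-}(u_*)$, and the increment domination $\|u^{(k+1)}-u^{(k)}\|_X\le\tau_k$ are mutually entangled, and it is precisely the algebra of the quadratic majorant $\psi$ — and of Newton's method applied to it — that makes the three bounds propagate together and produces the sharp threshold $\tfrac12$ in \eqref{betaK}, sharper than the value $\tfrac14$ that a crude contraction argument for the simplified Newton map $u\mapsto u-\Gamma_*\F(u)$ would reach.
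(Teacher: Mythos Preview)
The paper does not prove this theorem at all; it is stated as a classical result and simply quoted from \cite{Kantorovich1948,KantorovichAkilov,Ortega1968}, then applied as a black box. Your sketch is the standard Kantorovich majorant argument found in those references and is essentially correct, so there is nothing to compare.
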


From Lemma~\ref{NSFDifferentaibleLemma} and Theorem~\ref{Theorem Linearized wellposed} we conclude that the assumptions of Theorem~\ref{NewtonKantorowich} are satisfied with $D_*:=U^{X}_{r}$ for $r$ defined in \eqref{smallness_ustar}, $\beta=C_G$, $K=C_\B|\eps|$ and so with $u_*:=0$, bearing in mind ${\F'_0}=\dt-\A$, the smallness requirement that we have to impose according to \eqref{betaK} is just 
\[
C_G C_\B |\eps| \, \|(\dt-\A)^{-1}(f,u_0)\|_X<\frac12.
\]
This is in fact a smallness condition on the data $(f,u_0)$ and $\gamma,\delta$ that can be satisfied by assuming
\begin{equation}\label{smallness_data}
C_G^2 C_\B |\eps| \, \|(f,u_0)\|_Y<\frac12.
\end{equation}
To make sure that $B_{r^-}^X(0)\subseteq D_*$, we additionally assume that 
\begin{equation}\label{smallness_data_1}
r^-<r \text{ for }
r^\pm:=\frac{1}{C_G C_\B |\eps|}(1\pm\sqrt{1-2C_G C_\B |\eps| \, \|{\F'_0}^{-1}(f,u_0)\|_X}).
\end{equation}
This yields the following well-posedness result.
\begin{theorem}\label{thm:wellposed_local}
For any $T>0$ and any small enough data $(f,u_0)\in Y$ such that \eqref{smallness_data}, \eqref{smallness_data_1} holds for $\tilde{r}, r$ defined in \eqref{gammdeltasmallness},\eqref{smallness_ustar}, there exists a solution $u$ to $\F(u)=0$.

That is, for small enough data $\lincofp, \lincofv \in L^\infty$, $\forcep\in L^2(T;L^2))$, $\forcev\in L^2(T;(L^2)^d)$, $p_0\in H_0^1$, $v_0\in (H_0^1)^d$, the initial boundary value problem \eqref{2ndOrderNSF_ibvp} is solvable in $X$.

The solution lies in $B_{r^-}^X$ and is unique in $B_{r^+}^X$ with $r^\pm$ defined as in \eqref{smallness_data_1}; additionally, it satisfies the a priori estimate \eqref{apriori}.

Moreover, the Newton iterates defined by \eqref{Newton} starting at $u^{(0)}:=0$ quadratically converge to $u$ in $X$. 
\end{theorem}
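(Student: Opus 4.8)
The plan is to obtain the statement as a direct corollary of the Newton--Kantorovich Theorem (Theorem~\ref{NewtonKantorowich}) applied to $\F:X\to Y$, feeding in the two structural facts already established: Fr\'{e}chet differentiability of $\F$ with Lipschitz-continuous derivative (Lemma~\ref{NSFDifferentaibleLemma}) and bounded invertibility of the linearization together with its a priori bound (Theorem~\ref{Theorem Linearized wellposed}).

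Concretely, I would fix the data as in the statement, choose $D_*:=U^X_r$ with $r$ from \eqref{smallness_ustar} as the open convex set, and start Newton's method from $u_*:=u^{(0)}:=0\in D_*$. Then I would verify the hypotheses of Theorem~\ref{NewtonKantorowich} one by one. On $D_*$, Fr\'{e}chet differentiability and Lipschitz continuity of $\F'$ with some constant $K$ hold by Lemma~\ref{NSFDifferentaibleLemma}, cf.~\eqref{LipschitzFprime}. At the base point the bilinear part of $\F'$ vanishes, hence $\F'_0=\Lcal=\dt-\A$; since $0\in B^X_r$ and $\lincofp,\lincofv\in B^{L^\infty}_{\tilde{r}}$, Theorem~\ref{Theorem Linearized wellposed} gives $\Gamma_*:=(\F'_0)^{-1}\in\linearspace(Y,X)$ with $\beta:=\|\Gamma_*\|_{\linearspace(Y,X)}\le C_G$. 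Next, evaluating $\F$ at $0$ annihilates the linear and bilinear terms and leaves $\F(0)=-(f,u_0)$, so by the a priori estimate \eqref{apriori} one has $\|\Gamma_*\F(0)\|_X=\|(\dt-\A)^{-1}(f,u_0)\|_X\le C_G\|(f,u_0)\|_Y$; thus $\beta K\|\Gamma_*\F(0)\|_X\le C_G^2 K\,\|(f,u_0)\|_Y$, and the key hypothesis \eqref{betaK} --- with strict inequality, which later upgrades the convergence order to quadratic --- is precisely a smallness condition on $(f,u_0)$ of the form \eqref{smallness_data}. Finally, the set inclusion $B_{r^-}(u_*)=B^X_{r^-}\subseteq U^X_r=D_*$ is exactly \eqref{smallness_data_1}, $r^-<r$, with $r^\pm$ obtained from \eqref{rstart_rstarstar} for the present $\beta$, $K$ and $\Gamma_*\F(u_*)$, i.e. as in \eqref{smallness_data_1}.

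Once all the hypotheses are checked, Theorem~\ref{NewtonKantorowich} delivers all assertions at once: the Newton iterates \eqref{Newton} started at $u^{(0)}=0$ are well-defined, remain in $B^X_{r^-}$, and converge to a solution $u$ of $\F(u)=0$ which is unique in $B^X_{r^+}\cap D_*$, with at least quadratic order of convergence (thanks to the strict inequality). Since $\F$ was constructed so that $\F(u)=0$ is precisely the weak form of \eqref{2ndOrderNSF_ibvp} (the PDE holding in $L^2(T;(L^2)^n)$, the initial datum attained in $(H_0^1)^n$, and the homogeneous Dirichlet condition built into $X$), this is the claimed solvability of the initial boundary value problem in $X$ for small data. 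To recover the a priori estimate \eqref{apriori} for the nonlinear solution, I would rewrite $\F(u)=0$ as $\F'_u(u)=(f+\B[u,u],u_0)$, apply Theorem~\ref{Theorem Linearized wellposed} --- which is legitimate because $u\in B^X_{r^-}\subseteq B^X_r$ --- and then absorb the quadratic contribution using $\|\B[u,u]\|_{L^2(T;(L^2)^n)}\le C_\B|\eps|\,\|u\|_X^2$ together with the bound $\|u\|_X\le r^-$ and the smallness of the data.

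There is essentially no genuine difficulty in this theorem: it is bookkeeping built on Lemma~\ref{NSFquadraticTermEstimateLemma}, Lemma~\ref{NSFDifferentaibleLemma}, Theorem~\ref{Theorem Linearized wellposed} and the abstract Theorem~\ref{NewtonKantorowich}. The one point requiring care is the faithful translation of the abstract conditions \eqref{betaK}, \eqref{rstart_rstarstar} into the explicit smallness assumptions \eqref{smallness_data}, \eqref{smallness_data_1} --- in particular computing $\F(0)=-(f,u_0)$, tracking the constants $\beta=C_G$ and $K$, and verifying the ball inclusion $B^X_{r^-}\subseteq U^X_r$ --- together with the minor postprocessing that promotes the linearized a priori bound to the nonlinear solution.
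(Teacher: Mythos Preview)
Your proposal is correct and follows essentially the same approach as the paper: the paper's proof is precisely the short discussion preceding the theorem statement, which sets $D_*=U^X_r$, $u_*=0$, identifies $\F'_0=\dt-\A$, reads off $\beta=C_G$ and $K$ from Theorem~\ref{Theorem Linearized wellposed} and Lemma~\ref{NSFDifferentaibleLemma}, and then translates \eqref{betaK} and the ball inclusion into \eqref{smallness_data}, \eqref{smallness_data_1}. Your write-up is in fact slightly more explicit than the paper's on two points --- the computation $\F(0)=-(f,u_0)$ and the postprocessing step $\F'_u(u)=(f+\B[u,u],u_0)$ used to recover the a priori estimate for the nonlinear solution --- both of which the paper leaves implicit.
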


\subsection{Global in time well-posedness and exponential decay}
In this section we first of all note that the results from the last section extend to $T= \infty$.
To this end, let us write 
\begin{equation}
X(t):= H^1(t; (L^2)^{n}) \cap L^2 (t; ( H^2 \cap H^1_0 )^n ) \cap L^\infty(t, (H^1)^n ), 
\quad Y(t)=L^2(t;(L^2)^n)\times H_0^1.
\end{equation}
Indeed, all steps of the proofs of Theorem~\ref{Theorem Linearized wellposed} carry over to this case: \\
Due to \cite{Filippov:1988} we have well-posedness of the Galerkin ODE in the interval $(0,\infty)$, since $\A$, $\B[\cdot,u_*]$, $\B[u_*,\cdot]$ are in $L^1_{loc}(0,\infty)$ if $u_* \in X(\infty)$ and $f \in L^2(\infty;(L^2)^n)$. 

Independence of $T$ of the bound derived in the energy estimates provides us with 
$\|u_m\|_{X(\infty)}\leq$ $C_G \| ( f, u_0 ) \|_{Y(\infty )}$. 
Since our weak convergence argument for the PDE did not rely on any compactness argument, it directly carries over to $T=\infty$. The only place where we used compactness was attainment of the initial data, where we can restrict attention to a finite time interval, say, $[0,1]$, to invoke compactness of the embedding $X(1)\to C(1;H^s)$, while using $\|u_m\vert_{[0,1]}\|_{X(1)}\leq \|u_m\|_{X(\infty)} \leq C_G \|(f, u_0)\|_{Y(\infty )}$.

The Newton-Kantorovich arguments leading to Theorem~\ref{thm:wellposed_local} work on abstract spaces and therefore don't even need to be translated.

\medskip

As expected from the strong damping (see also Section~\ref{sec:Semigroup} below), we can also show exponential decay of solutions provided the forcing decays exponentially as well, that is,
\begin{equation}\label{exp_f}
f^\omega:t\mapsto \e^{\omega t} f(t) \in L^2(\infty;(L^2)^n) \text{ for some }\omega>0.
\end{equation}
Indeed, defining 
\[
\begin{aligned}
&(p^\lambda(t), \fat{v}^\lambda(t))^\transpose=u^\lambda(t):= \e^{\lambda t} u(t)=\e^{\lambda t}(p(t),\fat{v}(t))^\transpose,
\quad  E^\lambda(t):=\e^{-\lambda t}, \\ 
&\mathcal{A}^\lambda:= -\etap\lapl-\lambda\ident, \quad
\tilde{\mathcal{A}}^\lambda:= -\etav\laplvec-\lambda\ident,
\end{aligned}
\]
it is readily checked that $u^\lambda$ satisfies 
\begin{align} \label{2ndOrderNSF_ibvp_lambda}
\begin{split}
	\dt p^\lambda - \mathcal{A}^\lambda p^\lambda + (1 + \lincofp ) \divv \fat{v}^\lambda + \eps_1 E^\lambda p^\lambda \divv \fat{v}^\lambda + \eps_2 E^\lambda\grad p^\lambda \cdot \fat{v}^\lambda = \forcep^\lambda \\
	\dt \fat{v}^\lambda  - \tilde{\mathcal{A}} \fat{v}^\lambda +  (1 + \lincofv)\grad p^\lambda - \frac{\eps_3}{2} E^\lambda\grad (({p^\lambda})^2) + \frac{\eps_4}{2} E^\lambda\grad ({\fat{v}^\lambda} \cdot {\fat{v}^\lambda})= \forcev^\lambda \\
	(p^\lambda(0),\fat{v}^\lambda(0)) = (p_0, \fat{v_0}) \\
	p^\lambda = 0, \quad \fat{v}^\lambda = 0 \text{ on } \partial \Omega.
\end{split}	
\end{align}
Since we have $|E^\lambda|\leq1$, all estimates remain valid with slightly modified constants, as long as $\lambda\in(0,\min\{\etap,\etav\}\lambda_{\min}(-\lapl))$. Here $\lambda_{\min}(-\lapl)$ stands for the smallest eigenvalue of the Dirichlet Laplacian.
Thus, analogously to \eqref{apriori} we obtain
	\begin{align}\label{apriori_lambda}
		\| u^\lambda \|_{X(\infty)} \leq C_G \|(f^\lambda,u_0)\|_{Y(\infty)},
	\end{align}
where the right hand side is finite if $\lambda\leq\omega$.

\begin{theorem}\label{thm:wellposed_global_exp}
For any small enough data $\lincofp, \lincofv \in L^\infty$, $\forcep\in L^2(\infty;L^2))$, $\forcev\in L^2(\infty;(L^2)^d)$, $p_0\in H_0^1$, $v_0\in (H_0^1)^d$, the initial boundary value problem \eqref{2ndOrderNSF_ibvp} is solvable in $X(\infty)$.

The solution lies in $B_{r^-}^X$ and is unique in $B_{r^+}^X$ with $r^\pm$ defined as in \eqref{smallness_data_1}; additionally, it satisfies the a priori estimate \eqref{apriori}.

Moreover, the Newton iterates defined by \eqref{Newton} starting at $u^{(0)}:=0$ quadratically converge to $u$ in $X(\infty)$.

If additionally, for some $\lambda\in(0,\min\{\etap,\etav\}\lambda_{\min}( - \lapl))$, $\|f^\lambda\|_{L^2(\infty;(L^2)^n)}$ is finite and small enough, then $u^\lambda\in X(\infty)$ and in particular the decay estimate
\[
\|u(t)\|_{H_0^1}\leq C_0 \e^{-\lambda t} \|(f^\lambda, u_0)\|_{Y(\infty)}
\]
holds for some constant $C_0>0$ depending only on $\Omega$, $\lincofp$, $\lincofv$, $\lambda$, $\etap$, $\etav$, $\eps_i$ with $i \in \{1, \ldots, 4\}.$
\end{theorem}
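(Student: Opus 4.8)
The first three assertions follow by revisiting the proofs of Theorem~\ref{Theorem Linearized wellposed} and Theorem~\ref{thm:wellposed_local} and checking that every ingredient is insensitive to the length of the time horizon. Concretely, I would argue as follows. First, the existence result for the Galerkin ODE \eqref{GalerkinODE} is replaced by its analogue on $[0,\infty)$: since $\A$, $\B[\cdot,u_*]$, $\B[u_*,\cdot]$ have coefficients that are $L^1_{loc}(0,\infty)$ whenever $u_*\in X(\infty)$ and $f\in L^2(\infty;(L^2)^n)$, a Carath\'eodory-type existence theorem (e.g.\ \cite{Filippov:1988}) yields an absolutely continuous solution on all of $(0,\infty)$. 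Second, the energy estimates of Lemma~\ref{lem:enest} were deliberately formulated with the norm \eqref{normH1T} and with constants ($C_\B$, $C_P$, $C_\lapl$, $\underline{c}$, $\tilde C_G$, $C_G$, and the thresholds $r$, $\tilde r$ of \eqref{smallness_ustar}, \eqref{gammdeltasmallness}) that do not depend on $T$; hence $\|u_m\|_{X(\infty)}\leq C_G\|(f,u_0)\|_{Y(\infty)}$ holds uniformly in $m$. Third, the passage to the weak$^*$ limit uses no compactness in time, and the only compactness step -- attainment of the initial datum via $X(1)\embedded C([0,1];H^s)$ -- can be localised to the interval $[0,1]$ using $\|u_m|_{[0,1]}\|_{X(1)}\leq\|u_m\|_{X(\infty)}$. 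Finally, the Newton--Kantorovich Theorem~\ref{NewtonKantorowich} is purely abstract, so local solvability, uniqueness in $B^X_{r^+}$, the a~priori bound \eqref{apriori}, and quadratic convergence of the iterates \eqref{Newton} carry over verbatim to $X(\infty)$ under the smallness conditions \eqref{smallness_data}, \eqref{smallness_data_1}.

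For the exponential decay statement the plan is to apply the global well-posedness just obtained to the transformed system \eqref{2ndOrderNSF_ibvp_lambda} for $u^\lambda=\e^{\lambda t}u$ and then to undo the substitution. Two points must be checked: (i) the linear part of \eqref{2ndOrderNSF_ibvp_lambda}, with the shifted operators $\mathcal{A}^\lambda=-\etap\lapl-\lambda\ident$ and $\tilde{\mathcal{A}}^\lambda=-\etav\laplvec-\lambda\ident$, still satisfies the energy estimates of Lemma~\ref{lem:enest} with $T$-independent constants; (ii) the quadratic term keeps the estimate of Lemma~\ref{NSFquadraticTermEstimateLemma}. Point (ii) is immediate, since each quadratic contribution in \eqref{2ndOrderNSF_ibvp_lambda} carries the extra factor $E^\lambda(t)=\e^{-\lambda t}$ with $|E^\lambda|\leq1$, so the pointwise-in-time estimates underlying Lemma~\ref{NSFquadraticTermEstimateLemma} are unaffected.

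For point (i), the coercivity of $-\mathcal{A}^\lambda$ is what matters: testing with $p$ and with $-\lapl p$ and invoking the spectral inequality $\|\grad p\|_{L^2}^2\geq\lambda_{\min}(-\lapl)\,\|p\|_{L^2}^2$ (and, at the second level, $\|\lapl p\|_{L^2}^2\geq\lambda_{\min}(-\lapl)\,\|\grad p\|_{L^2}^2$) gives
\[
\etap\|\grad p\|_{L^2}^2-\lambda\|p\|_{L^2}^2\;\geq\;\Bigl(\etap-\tfrac{\lambda}{\lambda_{\min}(-\lapl)}\Bigr)\|\grad p\|_{L^2}^2\;\geq\;0,
\]
with the analogous bounds at the $H^2$ level and for the $\fat{v}$-components (with $\etav$ in place of $\etap$). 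Since $\lambda\in(0,\min\{\etap,\etav\}\lambda_{\min}(-\lapl))$, these right-hand sides stay coercive; replacing $\min\{\etap,\etav\}$ by $\min\{\etap,\etav\}-\lambda/\lambda_{\min}(-\lapl)>0$ throughout, the whole chain of absorptions in Lemma~\ref{lem:enest} and the estimate \eqref{estdtu} go through with re-chosen but still $T$-independent constants and smallness thresholds, yielding the a~priori bound \eqref{apriori_lambda}. Theorem~\ref{Theorem Linearized wellposed} and Theorem~\ref{NewtonKantorowich} then apply to \eqref{2ndOrderNSF_ibvp_lambda} whenever $(f^\lambda,u_0)$ is small in $Y(\infty)$, which holds under \eqref{exp_f} with $\omega\geq\lambda$ and $\|f^\lambda\|_{L^2(\infty;(L^2)^n)}$ small enough, and they produce a unique small solution $w\in X(\infty)$. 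Reversing the substitution, $\e^{-\lambda t}w\in X(\infty)$ solves \eqref{2ndOrderNSF_ibvp}, so by the uniqueness part of the global result it equals $u$; hence $u^\lambda=\e^{\lambda t}u=w\in X(\infty)$ and satisfies \eqref{apriori_lambda}. Finally, the embedding $X(\infty)\embedded L^\infty(\infty;(H^1)^n)$ yields $\sup_{t\geq0}\|u^\lambda(t)\|_{H_0^1}\leq C_0\|(f^\lambda,u_0)\|_{Y(\infty)}$ with $C_0$ depending only on the listed quantities, and multiplying by $\e^{-\lambda t}$ gives the claimed decay.

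The main obstacle is step (i): verifying that inserting the zeroth-order shift $-\lambda\ident$ into the principal part does not break the delicate chain of Young-inequality absorptions in Lemma~\ref{lem:enest} (in particular that the smallness radii $r$, $\tilde r$ and the auxiliary parameters can again be chosen independently of $T$), which is precisely what pins down the admissible decay rate $\lambda<\min\{\etap,\etav\}\lambda_{\min}(-\lapl)$. Everything else is bookkeeping already prepared by the $T$-independent formulation of the earlier estimates and by the abstract nature of the Newton--Kantorovich argument.
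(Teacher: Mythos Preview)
Your proposal is correct and follows essentially the same route as the paper: revisit the $T$-independent ingredients (Galerkin ODE via \cite{Filippov:1988}, energy estimates of Lemma~\ref{lem:enest}, weak$^*$ limit with localised compactness for the initial trace, abstract Newton--Kantorovich) to get global well-posedness, then pass to the weighted unknown $u^\lambda$ and use $|E^\lambda|\leq1$ together with the coercivity margin guaranteed by $\lambda<\min\{\etap,\etav\}\lambda_{\min}(-\lapl)$ to rerun the estimates and obtain \eqref{apriori_lambda}. Your explicit uniqueness step identifying the solution of \eqref{2ndOrderNSF_ibvp_lambda} with $\e^{\lambda t}u$ is a clean addition that the paper leaves implicit.
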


\section{Semigroup Theory}\label{sec:Semigroup}
\newcommand{\bfv}{\velo}
\newcommand{\bff}{\fat{f}}
\newcommand{\cA}{\mathfrak{A}}
\newcommand{\cB}{\mathfrak{B}}
\newcommand{\cC}{\mathfrak{C}}
\newcommand{\cD}{\mathfrak{D}}
\newcommand{\cH}{\mathfrak{H}}
\newcommand{\cM}{\mathfrak{M}}
\newcommand{\cAt}{\tilde{\cA}}
\newcommand{\cMt}{\tilde{\cM}}
\newcommand{\cAtt}{\tilde{\tilde{\cA}}}
\newcommand{\cBtt}{\tilde{\tilde{\cB}}}
In the small data and close to constant coefficient regime considered here, the expected qualitative behavior should already be visible in the linear constant coefficient setting by more straightforward means.
In this section, we thus derive some results from semigroup theory for the model equation \eqref{2ndOrderNSF} in the case $\eps=0$. The resulting linear equation differs from the linearization made in the Newton approach above but still gives some insight or actually some confirmation of the observations made for the more complicated system. In the following, we therefore analyze
\[
\begin{aligned}
&\partial_t p -\etap \lapl p + \nabla\cdot \bfv = \bff \\
&\partial_t \bfv -\etav \laplvec \bfv + \nabla p = g
\end{aligned}
\]
or equivalently (identifying $\AAA$ with its pointwise in time action) 
\[
u'(t)=\AAA u(t)
\]
with 
\begin{equation}
\AAA:= \left(\begin{array}{cc} 
-\cA&-\nabla\cdot\\
-\nabla & -\cAt
\end{array}\right),
\quad \cA:=-\etap \lapl, \quad \cAt:=-\etav \laplvec ,
\end{equation}
where $-\lapl$ and $- \laplvec$ are equipped with homogeneous Dirichlet boundary conditions. 
 
The operator $\AAA$ can naturally be decomposed into a skew adjoint and a self-adjoint positive definite part
\[
\AAA=\AAA_{skew}-\AAA_{pos} \text{ with }
\AAA_{skew}:= \left(\begin{array}{cc} 
0&-\nabla\cdot\\
-\nabla & 0
\end{array}\right),\
\AAA_{pos}:= \left(\begin{array}{cc} 
-\etap \lapl&0\\
0& -\etav \laplvec
\end{array}\right)
\]
on 
\[
H:=\cH^{1+d}, \quad \mathcal{D}(\AAA) := \mathcal{D}(\cA)^{1+d}, \quad
\cH:=L^2(\Omega), \quad  \mathcal{D}(\cA):= H^2(\Omega)\cap H_0^1(\Omega),
\]
where
\[
\AAA_{skew}^*=-\AAA_{skew}, \quad \AAA_{pos}^*=\AAA_{pos}, \quad (\AAA_{pos}u,u)\geq\min\{\etap,\etav\}\lambda_{\min}(-\lapl) \|u\|_H^2, \ u\in \mathcal{D}(\AAA).
\] 

Thus, in the undamped setting $\etap=\etav=0$, $\AAA$ generates a strongly continuous unitary group according to Stone's Theorem \cite[Theorem 3.24]{EngelNagel:2000}, whereas in the case of  strictly positive viscosity coefficients $\etap,\,\etav>0$ that we focus on in the sequel, it is dissipative, that is, 
\[
\text{ for all }u\in \mathcal{D}(\AAA): \ (\AAA u,u)\geq \tilde{\eta} \|u\|_H^2  
\]
with $\tilde{\eta}:=\min\{\etap,\etav\}\lambda_{\min}(-\lapl)>0$.
Indeed, $\AAA$ is invertible with bounded inverse
\[
\AAA^{-1}= -(-\AAA)^{-1}=-\left(\begin{array}{cc} 
\cA^{-1}+\cA^{-1}\nabla\cdot\cMt^{-1}\nabla\cA^{-1}&-\cA^{-1}\nabla\cdot\cMt^{-1}\\
-\cMt^{-1}\nabla \cA^{-1} & \cMt^{-1}
\end{array}\right)\in \linearspace(H, \mathcal{D}(\AAA)),
\]
where $\cMt = \cAt-\nabla\cA^{-1}\nabla\cdot$ and bounded invertibility $\cMt^{-1}\in \linearspace(\cH^d,\mathcal{D}(\cA)^d)$ follows from the fact that the operators $\cAt$ and $-\nabla\cA^{-1}\nabla\cdot\in L(\mathcal{D}(\cA)^d,\cH^d)$ are self-adjoint as well as positive and nonnegative, respectively. 
Moreover, obviously $\AAA$ is densely defined and it is readily checked (e.g., by the above stated boundedness of its inverse) that $\AAA$ is closed, that is,
\[
\text{ for all }(u_j)_{j\in\mathbb{N}}\in \mathcal{D}(\AAA): \ \Bigl(u_j\to u \text{ and }\AAA u_j \to f \text{ in }H\Bigr)
\ \Rightarrow \ \Bigl(u\in\mathcal{D}(\AAA) \text{ and }\AAA u = f \Bigr).
\]

As a consequence of the Lumer-Philips Theorem \cite[Theorem 3.15]{EngelNagel:2000} we have the following.
\begin{lemma}\label{lem:contractionsemigroup}
$\AAA$ is the infinitesimal generator of a contraction semigroup $S(t)= \e^{t\AAA}$ on $H$.
\end{lemma}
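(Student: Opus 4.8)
The plan is to invoke the Lumer--Phillips Theorem \cite[Theorem 3.15]{EngelNagel:2000}, according to which a densely defined, dissipative operator on a Hilbert space whose range $\mathrm{ran}(\lambda_0 I-\AAA)$ equals $H$ for some $\lambda_0>0$ generates a contraction semigroup. All three ingredients have essentially been assembled in the discussion preceding the statement, so the proof will mainly consist of collecting them and pointing to the relevant facts.

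First, $\mathcal{D}(\AAA)=\mathcal{D}(\cA)^{1+d}$ is dense in $H=\cH^{1+d}$ because $H^2(\Omega)\cap H_0^1(\Omega)$ is dense in $L^2(\Omega)$. Second, dissipativity: writing $\AAA=\AAA_{skew}-\AAA_{pos}$ with $\AAA_{skew}^*=-\AAA_{skew}$ and $(\AAA_{pos}u,u)\geq\tilde{\eta}\|u\|_H^2$ gives $\mathrm{Re}\,(\AAA u,u)=-(\AAA_{pos}u,u)\leq-\tilde{\eta}\|u\|_H^2\leq 0$ for every $u\in\mathcal{D}(\AAA)$, so $\AAA$ is (strictly) dissipative; and $\AAA$ is closed, as already noted (e.g. via the bounded inverse exhibited above).

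For the range condition I would exploit precisely that explicit bounded inverse $\AAA^{-1}\in\linearspace(H,\mathcal{D}(\AAA))$: it says that $0\in\rho(\AAA)$, and since the resolvent set is open there is some $\lambda_0>0$ with $\lambda_0\in\rho(\AAA)$, so that $\lambda_0 I-\AAA\colon\mathcal{D}(\AAA)\to H$ is a bijection, in particular onto. (Alternatively, from $\mathrm{Re}\,((\lambda I-\AAA)u,u)\geq\lambda\|u\|_H^2$ and closedness one obtains injectivity and closed range of $\lambda I-\AAA$ for every $\lambda>0$, and surjectivity for all $\lambda>0$ then follows by solving the associated coercive variational problem on $(H_0^1)^{1+d}$ via Lax--Milgram and bootstrapping with elliptic regularity.) With density, dissipativity, closedness and the range condition in hand, Lumer--Phillips yields that $\AAA$ generates a contraction semigroup, which we denote $S(t)=\e^{t\AAA}$. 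The only point requiring any care is the surjectivity of $\lambda_0 I-\AAA$, but since the bounded inverse of $\AAA$ has already been written out in full, this reduces to the openness of $\rho(\AAA)$, so there is no genuine obstacle here.
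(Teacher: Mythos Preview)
Your proposal is correct and follows essentially the same route as the paper: the paper simply states that the lemma is ``a consequence of the Lumer--Phillips Theorem'' after having established density of the domain, dissipativity via the decomposition $\AAA=\AAA_{skew}-\AAA_{pos}$, closedness, and bounded invertibility of $\AAA$, and you collect exactly these ingredients. Your only addition is making the range condition explicit by passing from $0\in\rho(\AAA)$ to some $\lambda_0>0$ via openness of the resolvent set, which the paper leaves implicit.
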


We now investigate analyticity and exponential stability of $S$, where the latter means that there exist constants $C$, $\gamma>0$ such that 
\[
\|S(t)\|_{\linearspace(H,H)}\leq C \e^{-\gamma t}.
\] 
To this end, we take a closer look at the resolvent along the imaginary axis to first of all prove that the resolvent set $\rho$ satisfies
\begin{equation}\label{rhoIm}
\imath\mathbb{R}\subseteq\rho(\AAA).
\end{equation}
Under this condition, given Lemma~\ref{lem:contractionsemigroup}, analyticity of the semigroup $S(t)$ is equivalent to
\begin{equation}\label{limsup1}
\limsup_{|\lambda|\to\infty}\|\lambda(\imath\lambda \ident -\AAA)^{-1}\|_{\linearspace(H,H)}<\infty
\end{equation}
cf., e.g., \cite[Theorem 1.3.3]{LiuZheng:1999}, while a result by Pr\"uss \cite[Corollary 4]{Pruess:1984} states that given  Lemma~\ref{lem:contractionsemigroup}, exponential stability of $S(t)$ is equivalent to \eqref{rhoIm} together with 
\begin{equation}\label{limsup0}
\limsup_{|\lambda|\to\infty}\|(\imath\lambda \ident - \AAA)^{-1}\|_{\linearspace(H,H)}<\infty.
\end{equation}

By splitting all involved quantities into their real and imaginary parts
\[
p=p_\Re+\imath p_\Im, \quad \bfv=\bfv_\Re+\imath \bfv_\Im, \quad 
g=g_\Re+\imath g_\Im, \quad \bff=\bff_\Re+\imath \bff_\Im,
\]
we obtain the following equivalence for the resolvent equation.
\[
 (\imath\lambda \ident -\AAA)\left(\begin{array}{c}p\\ \bfv\end{array}\right)=\left(\begin{array}{c}g\\ \bff\end{array}\right) \ \Leftrightarrow \
\left(\begin{array}{cccc} 
\cA& -\lambda &\nabla\cdot&0\\
\lambda&\cA & 0 &\nabla\cdot\\
\nabla& 0 &\cAt&-\lambda\\
0&\nabla&\lambda&\cAt
\end{array}\right)
\left(\begin{array}{c}p_\Re\\ p_\Im\\ \bfv_\Re\\ \bfv_\Im\end{array}\right)
=\left(\begin{array}{c}g_\Re\\ g_\Im\\ \bff_\Re\\ \bff_\Im\end{array}\right)
\]
Some elementary but tedious computations yield the following expression for the operator matrix appearing here 
\begin{equation}\label{resolventIm}
\begin{aligned}
&\left(\begin{array}{cccc} 
\cA& -\lambda &\nabla\cdot&0\\
\lambda&\cA & 0 &\nabla\cdot\\
\nabla& 0 &\cAt&-\lambda\\
0&\nabla&\lambda&\cAt
\end{array}\right)^{-1}
=\left(\begin{array}{cc} 
D-DBMB^*D& -DBM\\
MB^*D&M\end{array}\right)
\end{aligned}
\end{equation}
with
\begin{equation}
\begin{aligned}
&D=\left(\begin{array}{cc} 
\cA& -\lambda \\
\lambda&\cA \end{array}\right)^{-1}
=\left(\begin{array}{cc} 
\cB^{-1}&\lambda \cB^{-1}\cA^{-1}\\
-\lambda \cB^{-1}\cA^{-1}&\cB^{-1}\end{array}\right),
\quad D^*=D^T, \\
&B=\left(\begin{array}{cc} 
\nabla\cdot&0\\
0&\nabla\cdot\end{array}\right),\quad
B^*=-\left(\begin{array}{cc} 
\nabla&0\\
0&\nabla\end{array}\right),\\
&M=\left(\left(\begin{array}{cc} 
\cAt& -\lambda \\
\lambda&\cAt \end{array}\right)
+B^*DB\right)^{-1}
=\left(\begin{array}{cc} 
\cAtt^{-1}-\cAtt^{-1}\cC\cBtt^{-1}\cC\cAtt^{-1}&\cAtt^{-1}\cC\cBtt^{-1}\\
-\cBtt^{-1}\cC\cAtt^{-1}&\cBtt^{-1}\end{array}\right),\\
&\cB=\cA+\lambda^2\cA^{-1}, \quad
\cC=\lambda(\ident+\nabla\cB^{-1}\cA^{-1}\nabla\cdot), \\
&\cAtt=\cAt-\nabla\cB^{-1}\nabla\cdot, \quad
\cBtt=\cAtt+\cC\cAtt^{-1}\cC
\end{aligned}
\end{equation}
from which we can conclude \eqref{rhoIm}.
The following mapping properties as well as large $\lambda$ asymptotics hold.
\begin{equation}\label{lambda2BinvAinv}
\begin{aligned}
&\|\lambda^2\cB^{-1}\cA^{-1}\|_{\linearspace(\cH,\cH)}
=\sup_{p\in \mathcal{D}(\cA)\setminus\{0\}}\frac{(\lambda^2\cB^{-1}\cA^{-1}p,p)}{\|p\|_{\cH}^2}
=\sup_{q\in\mathcal{D}(\cA^2)\setminus\{0\}}\frac{\|q\|_{\cH}^2}{(\lambda^{-2}\cB\cA q,q)}\\
&=\sup_{q\in\mathcal{D}(\cA^2)\setminus\{0\}}\frac{\|q\|_{\cH}^2}{(\ident+\lambda^{-2}\cA^2 q,q)}\leq 1
\end{aligned}
\end{equation}
by density of $\mathcal{D}(\cA)$ in $\cH$ and with $q=\lambda(\cB^{-1}\cA^{-1})^{1/2}p$.
On the other hand, similarly we have
\begin{equation}\label{BinvA}
\|\cB^{-1}\cA\|_{\linearspace(\cH,\cH)}
=\|(\ident+\lambda^2\cA^{-2})^{-1}\|_{\linearspace(\cH,\cH)}\leq 1.
\end{equation}
Thus by interpolation, using the fact that $\cB$ is self-adjoint and commutes with $\cA$, we conclude 
\[
\|\lambda\cB^{-1}\|_{\linearspace(\cH,\cH)} 
=\|\lambda^2\cB^{-2}\|_{\linearspace(\cH,\cH)}^{1/2}
=\|\lambda^2\cB^{-1}\cA^{-1}\cB^{-1}\cA\|_{\linearspace(\cH,\cH)}^{1/2}
\leq 1.
\]
Altogether, this implies uniform boundedness of $\lambda D$ and analogously\footnote{Even without using the representation of the inverse but just the fact that $M^{-1}\succeq \left(\begin{array}{cc} 
\cAt& -\lambda \\
\lambda&\cAt \end{array}\right)$ suffices.}, of $\lambda M$ with
\[
\|\lambda D\|_{\linearspace(\cH^2,\cH^2)}\leq 2, \quad \|\lambda M\|_{\linearspace(\cH^{2d},\cH^{2d})}\leq 2.
\]
Additionally, for the off-diagonal entries in \eqref{resolventIm} with 
$q:=\left(\begin{array}{c}q_\Re\\q_\Im\end{array}\right)$, 
$\tilde{q}:=\text{diag}(\cA^{-1/2},\cA^{-1/2})q$
we have 
\[
\begin{aligned}
\|B^*D q\|_{L^2(\Omega)^{2d}}^2 &= \|D q\|_{H_0^1(\Omega)^2}^2 
= \tfrac{1}{\etap} \|\text{diag}(\cA^{1/2},\cA^{1/2})D q\|_{L^2(\Omega)^2}^2
= \tfrac{1}{\etap} (q,D^T\text{diag}(\cA,\cA)D q)_{L^2(\Omega)^2}\\
&= \tfrac{1}{\etap} (\tilde{q}, \text{diag}(\cD,\cD)\tilde{q})_{L^2(\Omega)}
\leq \tfrac{2}{\etap} \|\tilde{q}\|_{L^2(\Omega)^2}^2
=2\|q\|_{H^{-1}(\Omega)^2}^2
\leq 2 C_{PF}^2 \|q\|_{L^2(\Omega)^2}^2, 
\end{aligned}
\]
\text{where }$\cD=(\cB^{-1}\cA)^2+\lambda^2\cB^{-2}$ and $\|\cD\|_{\linearspace(\cH^2,\cH^2)}\leq2$.
Here $C_{PF}$ is the constant in the Poincar\'{e}-Friedrichs inequality
\[
\|v\|_{L^2(\Omega)}\leq C_{PF} \|\nabla v\|_{L^2(\Omega)}, \quad v\in H_0^1(\Omega)
\]
and $H^{-1}$ denotes the dual space of $H^1_0$.
Thus 
\[
\|B^*D\|_{\linearspace(\cH^2,\cH^{2d})}\leq \sqrt{2}C_{PF}, \quad 
\|B^*D^T\| _{\linearspace(\cH^2,\cH^{2d})}\leq \sqrt{2}C_{PF}, \quad 
\|DB\| _{\linearspace(\cH^{2d},\cH^2)}\leq \sqrt{2}C_{PF},
\]
where the second estimate follows analogously to the first one and the last one from the second one by duality. 
Note that the operators $B^*D$, $DB$ are even compact.

Altogether we therefore obtain \eqref{limsup1}.

\begin{lemma}\label{lem:resolvent}
The semigroup $S(t)$ generated by $\AAA$ is analytic and exponentially stable. 
\end{lemma}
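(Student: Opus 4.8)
The plan is to prove the lemma by simply cashing in the abstract characterizations already quoted together with the resolvent estimates assembled just above its statement. Since Lemma~\ref{lem:contractionsemigroup} gives that $\AAA$ generates a contraction semigroup and \eqref{rhoIm} establishes $\imath\mathbb{R}\subseteq\rho(\AAA)$, analyticity of $S(t)$ is by \cite[Theorem 1.3.3]{LiuZheng:1999} equivalent to the uniform bound \eqref{limsup1}, and exponential stability is by \cite[Corollary 4]{Pruess:1984} equivalent to \eqref{rhoIm} together with \eqref{limsup0}. So the entire task reduces to verifying \eqref{limsup1} and \eqref{limsup0}, and for this I would work directly from the $2\times2$ block representation of $(\imath\lambda\ident-\AAA)^{-1}$ in \eqref{resolventIm}.

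The key steps are as follows. First I would record the four building blocks already available: $\|\lambda D\|_{\linearspace(\cH^2,\cH^2)}\leq 2$ and $\|\lambda M\|_{\linearspace(\cH^{2d},\cH^{2d})}\leq 2$ (hence $\|D\|,\|M\|\leq 2/|\lambda|$), together with $\|B^*D\|_{\linearspace(\cH^2,\cH^{2d})}\leq\sqrt2\,C_{PF}$, $\|DB\|_{\linearspace(\cH^{2d},\cH^2)}\leq\sqrt2\,C_{PF}$ and the transposed analogues. Then, estimating each of the four blocks of \eqref{resolventIm} after multiplication by $\lambda$, I would obtain $\|\lambda D\|\leq 2$, $\|\lambda\,DBMB^*D\|\leq\|DB\|\,\|\lambda M\|\,\|B^*D\|\leq 4C_{PF}^2$, $\|\lambda\,DBM\|\leq\|DB\|\,\|\lambda M\|\leq 2\sqrt2\,C_{PF}$, $\|\lambda\,MB^*D\|\leq\|\lambda M\|\,\|B^*D\|\leq 2\sqrt2\,C_{PF}$, and $\|\lambda M\|\leq 2$. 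Summing the block norms yields a finite, $\lambda$-independent bound on $\|\lambda(\imath\lambda\ident-\AAA)^{-1}\|_{\linearspace(H,H)}$, which is precisely \eqref{limsup1}; since this quantity is $\bigO(1)$ uniformly in $\lambda$, the resolvent itself is $\bigO(1/|\lambda|)$, so \eqref{limsup0} holds a fortiori. Invoking \cite[Theorem 1.3.3]{LiuZheng:1999} and \cite[Corollary 4]{Pruess:1984} then gives analyticity and exponential stability, respectively, and tracing the constants shows that the decay rate is governed by $\tilde{\eta}=\min\{\etap,\etav\}\lambda_{\min}(-\lapl)$.

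I do not expect a genuine obstacle at this stage, because the heavy lifting has already been done: the derivation of the nested Schur-complement representation \eqref{resolventIm} via $\cB$, $\cC$, $\cAtt$, $\cBtt$, the asymptotics \eqref{lambda2BinvAinv}, \eqref{BinvA}, and the interpolation argument yielding $\|\lambda\cB^{-1}\|\leq 1$. The only points requiring a line of care are: (i) ensuring that \eqref{rhoIm} itself, and not merely \eqref{limsup1}, is fed into the Liu--Zheng and Pr\"uss hypotheses, so that the criteria apply on the full imaginary axis; (ii) noting that the additional observation that $B^*D$ and $DB$ are compact is not needed for this lemma, though it is a bonus (it forces the part of $\sigma(\AAA)$ off the dissipativity half-plane to consist of isolated eigenvalues); and (iii) checking that the final rate $\gamma$ can indeed be chosen independently of $\lambda$, which is immediate from the $\bigO(1)$ bound just established.
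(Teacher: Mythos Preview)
Your proposal is correct and follows essentially the same route as the paper: you combine the block representation \eqref{resolventIm} with the already-derived bounds $\|\lambda D\|\leq 2$, $\|\lambda M\|\leq 2$, $\|B^*D\|,\|DB\|\leq\sqrt{2}\,C_{PF}$ to obtain \eqref{limsup1} (whence \eqref{limsup0}), and then invoke the Liu--Zheng and Pr\"uss criteria together with \eqref{rhoIm} and Lemma~\ref{lem:contractionsemigroup}. The only minor overreach is your remark that ``tracing the constants'' yields the decay rate $\tilde{\eta}$: the Pr\"uss criterion alone gives exponential stability without an explicit rate, so this would need the additional (standard) fact that for analytic semigroups the spectral bound equals the growth bound.
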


\section{Conclusion}
In this paper we have shown that a first order in time quadratic wave equation can be obtained from a small amplitude approximation. We have provided the derivation of such a system from the Navier-Stokes-Fourier equations in Section \ref{sec:Derivation} and proven its well-posedness with Dirichlet boundary conditions in Section \ref{sec:Wellposedness}. 
An extension of the analysis to the practically relevant but technically somewhat more complicated Neumann and/or impedance boundary case will be subject of future research.

Since Fourier's law of heat conduction, that underlies the entropy equation \eqref{eq:energy} used here, is known to lead to nonphysical infinite speed of propagation, we plan to also investigate the use of the Maxwell-Cattaneo law and fractional versions thereof. An analysis of the resulting second order in time nonlinear wave equations can be found in, e.g., \cite{BongartiCharoenphonLasiecka20,MaxwellCattaneo,
frac_tau2zero_PartII}.
However, the derivation and analysis of pressure-velocity systems like the one studied here (but with the leading time derivative probably differing from one) is yet to be done.

From a theoretical and also practical viewpoint the limit as $\eta, \, \eps \to 0$ of \eqref{2ndOrderNSF} is of interest as well; that is, convergence to solutions of the system with $\eta=\eps=0$ in appropriate function spaces in the spirit of, e.g., \cite{BongartiCharoenphonLasiecka20,frac_tau2zero_PartII}.
This requires energy estimates that are uniform with respect to vanishing $\eta$, $\eps$ as well as investigations on their appropriate interplay when taking limits to still allow global in time well-posedness. 

The modeling and analysis results of this paper will serve as a basis for the development and analysis of numerical solution methods, in particular for adaptive space-time Galerkin methods. These are able to adequately treat shock-like scenarios and allow high parallelizability as well as $hp$-adaptivity in space-time.%
\section*{Acknowledgement}
This work was supported by the Austrian Science Fund (FWF) [10.55776/P36318]


\begin{thebibliography}{10}

\bibitem{Blackstock:63}
D.T. Blackstock.
\newblock Approximate equations governing finite-amplitude sound in
  thermoviscous fluids.
\newblock Tech Report GD/E Report GD-1463-52 General Dynamics Corp., Rochester,
  New York, 1963.

\bibitem{BongartiCharoenphonLasiecka20}
M.~Bongarti, S.~Charoenphon, and I.~Lasiecka.
\newblock Singular thermal relaxation limit for the {M}oore-{G}ibson-{T}hompson
  equation arising in propagation of acoustic waves.
\newblock In {\em Semigroups of operators---theory and applications}, volume
  325 of {\em Springer Proc. Math. Stat.}, pages 147--182. Springer, Cham,
  2020.

\bibitem{BrezinaFeireislNovotny2020}
J.~B\v{r}ezina, E.~Feireisl, and A.~Novotn\'{y}.
\newblock Stability of strong solutions to the {Navier}--{Stokes}--{Fourier}
  system.
\newblock {\em SIAM Journal on Mathematical Analysis}, 52(2):1761--1785, 2020.

\bibitem{Crighton79}
D.G. Crighton.
\newblock Model equations of nonlinear acoustics.
\newblock {\em Annual Review of Fluid Mechanics}, 11(1):11--33, 1979.

\bibitem{DekkersRozanova2020}
A.~Dekkers and A.~Rozanova-Pierrat.
\newblock Models of nonlinear acoustics viewed as approximations of the
  {N}avier-{S}tokes and {E}uler compressible isentropic systems.
\newblock {\em Commun. Math. Sci.}, 18(8):2075--2119, 2020.

\bibitem{DoerflerFindeisenWienersZiegler:2019}
W.~D\"orfler, S.~Findeisen, C.~Wieners, and D.~Ziegler.
\newblock Parallel adaptive discontinuous galerkin discretizations in space and
  time for linear elastic and acoustic waves.
\newblock In {\em Space-Time Methods: Applications to Partial Differential
  Equations}, pages 61--88. 2019.

\bibitem{EngelNagel:2000}
K.-J. Engel and R.~Nagel.
\newblock {\em One-parameter semigroups for linear evolution equations}, volume
  194 of {\em Graduate Texts in Mathematics}.
\newblock Springer-Verlag, New York, 2000.

\bibitem{Evans:2010}
L.C. Evans.
\newblock {\em Partial {Differential} {Equations}}, volume~19 of {\em Graduate
  Studies in Mathematics}.
\newblock American Mathematical Society, Providence (R.I.), second edition,
  2010.

\bibitem{fambri:2020}
F.~Fambri.
\newblock Discontinuous {Galerkin} methods for compressible and incompressible
  flows on space-time adaptive meshes: {Towards} a novel family of efficient
  numerical methods for fluid dynamics.
\newblock {\em Archives of Computational Methods in Engineering},
  27(1):199--283, 2020.

\bibitem{Filippov:1988}
A.F. Filippov.
\newblock {\em Differential {Equations} with {Discontinuous} {Righthand}
  {Sides}}, volume~18 of {\em Mathematics and {Its} {Applications}}.
\newblock Springer Netherlands, Dordrecht, 1988.

\bibitem{Hamilton:1997}
M.F. Hamilton and D.T. Blackstock.
\newblock {\em Nonlinear {Acoustics}}.
\newblock Academic Press, San Diego, Calif., first edition, 1997.

\bibitem{Kaltenbacher:2015}
B.~Kaltenbacher.
\newblock Mathematics of nonlinear acoustics.
\newblock {\em Evolution Equations \& Control Theory}, 4(4):447--491, 2015.

\bibitem{MaxwellCattaneo}
B.~Kaltenbacher, I.~Lasiecka, and M.A. Pospieszalska.
\newblock {Wellposedness and exponential decay of the energy in the nonlinear
  Jordan-Moore-Gibson-Thompson equation arising in high intensity ultrasound.}
\newblock {\em Mathematical Models and Methods in the Applied Sciences M3AS},
  22(11):1250035, 2012.

\bibitem{frac_tau2zero_PartII}
B.~Kaltenbacher and V.~Nikoli\'{c}.
\newblock {The vanishing relaxation time behavior of multi-term nonlocal
  Jordan-Moore-Gibson-Thompson equations}.
\newblock {\em Nonlinear Analysis: Real World Applications}, 76:103991, 2024.
\newblock see also arXiv:2302.06196 [math.AP].

\bibitem{Kantorovich1948}
L.V. Kantorovich.
\newblock Functional analysis and applied mathematics.
\newblock {\em Vestnik Leningrad. Univ.}, 3(6):3--18, 1948.

\bibitem{KantorovichAkilov}
L.V. Kantorovich and G.P. Akilov.
\newblock {\em Functional analysis}.
\newblock Pergamon Press, Oxford-Elmsford, N.Y., second edition, 1982.

\bibitem{kuznetsov:1971}
V.~P. Kuznetsov.
\newblock Equations of nonlinear acoustics.
\newblock {\em Soviet Physics: Acoustics}, (16):467--470, 1971.

\bibitem{Lighthill:56}
M.J. Lighthill.
\newblock Viscosity effects in sound waves of finite amplitude.
\newblock In {\em Surveys in Mechanics}, pages 249--350. Cambridge University
  Press, 1956.

\bibitem{LiuZheng:1999}
Z.~Liu and S.~Zheng.
\newblock {\em Semigroups associated with dissipative systems}, volume 398.
\newblock CRC Press, Florida, 1999.

\bibitem{bansal:2021}
A.~Moiola and I.~Perugia.
\newblock A space-time {Trefftz} discontinuous {Galerkin} method for the
  acoustic wave equation in first-order formulation.
\newblock {\em Numer. Math.}, 138(2):389--435, 2018.

\bibitem{Ortega1968}
J.M. Ortega.
\newblock {The Newton-Kantorovich Theorem}.
\newblock {\em American Mathematical Monthly}, 75:658--660, 1968.

\bibitem{Pruess:1984}
J.~Pr\"{u}ss.
\newblock On the spectrum of {$C\sb{0}$}-semigroups.
\newblock {\em Trans. Amer. Math. Soc.}, 284(2):847--857, 1984.

\bibitem{Roubicek:2013}
T.~Roub\'{i}\v{c}ek.
\newblock {\em Nonlinear {Partial} {Differential} {Equations} with
  {Applications}}, volume 153 of {\em International {Series} of {Numerical}
  {Mathematics}}.
\newblock Springer Basel, Basel, 2013.

\bibitem{Tani:2017}
A.~Tani.
\newblock Mathematical analysis in nonlinear acoustics.
\newblock {\em AIP Conference Proceedings}, 1907(1):020003, 2017.

\end{thebibliography}
\end{document}